\newcommand{\real}{\mbox{$\mathbb{R}$}}
\newcommand{\U}{\mbox{$\mathcal{U}$}}
\newcommand{\A}{\mbox{$\mathcal{A}$}}
\newcommand{\reach}{\mbox{$\mathcal{R}$}}
\newcommand{\trdeg}{\mbox{\rm trdeg}}
\newcommand{\Ker}{\mbox{\rm Ker}}
\newcommand{\Na}{\mbox{$\mathcal{N}$}}
\newcommand{\sgn}{\mathrm{sgn}}
\newtheorem{theorem}{Theorem}[section]
\newtheorem{assumption}[theorem]{Assumption}
\newtheorem{corollary}[theorem]{Corollary}
\newtheorem{definition}[theorem]{Definition}
\newtheorem{procedure}[theorem]{Procedure}
\newtheorem{proposition}[theorem]{Proposition}
\begin{document}

\title{Local Nash realizations}

\author{\authorblockN{Jana N\v{e}mcov\'{a}}
\authorblockA{Department of Mathematics\\
Institute of Chemical Technology\\
Technick\'{a} 5, 166 28 Prague 6, Czech republic\\
Email: Jana.Nemcova@vscht.cz}
\and
\authorblockN{Mih\'{a}ly Petreczky}
\authorblockA{Department of Computer Science and Automatic Control \\
Ecole des Mines de Douai\\
941, rue Charles Bourseul, 59508 Douai, France\\
Email: mihaly.petreczky@mines-douai.fr}
}

\maketitle

\begin{abstract}
In this paper we investigate realization theory of a class of non-linear systems,
called Nash systems. Nash systems are non-linear systems whose vector fields and
readout maps are analytic semi-algebraic functions.  In this paper we will present
a characterization of minimality in terms of observability and reachability and
show that minimal Nash systems are isomorphic. The results are local in nature, i.e.
they hold only for small time intervals.  The hope is that the presented results
can be extended to hold globally.
\end{abstract}

\IEEEpeerreviewmaketitle

\section{Introduction}

This paper deals with so-called Nash systems, i.e. non-linear systems the right-hand sides of which are
defined by Nash functions. By a Nash function one refers to a semi-algebraic analytic function. Therefore, the class of Nash systems is an extension of the class of polynomial systems and it is a subclass of analytic nonlinear systems. It is an interesting class of systems to study because of their wide use in e.g. systems biology to model metabolic, signaling, and genetic networks. Moreover, this class also allows a constructive description by means of finitely many polynomial equalities and inequalities which leads to the possibility to derive computational methods for control and analysis of these systems.
\par
An example of framework in systems biology which relies on Nash systems is well-known Biochemical Systems Theory, see \cite{savageau:1969a, savageau:1969b,savageau:1976,voit:2000}. In this framework all processes in metabolic and gene-regulatory networks are modeled by products of power-law functions, i.e. sums of products $x_1^{q_1}x_2^{q_2} \cdots x_n^{q_n}$ of state variables $x_1, \dots , x_n$ taken to rational powers ($q_i$, $i=1,\dots,n$ are rational numbers). Such functions are a special case of Nash functions. Note that the values of rational exponents (kinetic orders) in power-law systems can be related to parameters of different rate laws such as for example Michaelis-Menten kinetics, see \cite{voit:2000}. Another example of framework in systems biology which relies on Nash systems is the tendency modeling framework, see \cite{visser:etal:2000}. It extends the power-law framework by combining mass-action and power-law kinetics into tendency kinetics.
\par
In this paper we deal with the properties of Nash systems which are relevant for modeling of biological data. Namely, we deal with observability, reachability and minimality of Nash systems (realizations) which represent data described by a response map. Our approach is based on realization theory for Nash systems, see \cite{cdc,nemcova:petreczky:schuppen:2013}, which is a continuation of the approach followed by \cite{Jak:RealNonlin} for nonlinear systems, \cite{Son:Resp,bartosiewicz:1988} for polynomial systems and the one in \cite{wang:sontag:1992a,bartosiewicz:1987b,ja:2008b,ja:2008c} for rational systems. It is also closely related to the more recent work \cite{bart:time_scale,bartosiewicz:kotta:pawluszewicz:wyrwas:2011}.
This paper is a longer version with all technical details of the paper submitted to the 18th International Conference on Methods and Models in Automation and Robotics in Poland.
\par
We introduce the notion of local Nash realization of a shifted response map
$p$. Let $u$ be an input signal and $p$ a response map.
Informally, a Nash system is a local Nash realization of a response map $p$ shifted by $u$,
if the response of the system to an input $v$
equals the value of $p$ at $uv$, provided that $v$ is defined on a small enough
time interval. Here $uv$ denotes the concatenation of inputs. The map $p$ shifted by $u$ is denoted by $p_u$.
In other words, the values of $p_u$ equal the outputs of
the Nash system, at least on a small enough time interval.
We show the following:
\\
1) There exists a local Nash realization of $p_u$ if and only if
   the transcendence basis of the observation algebra generated by $p$ is finite.
   This condition is analogous to the finite Hankel-rank condition for
   linear, bilinear and analytic systems \cite{isi:nonlin}.
   Moreover, if the transcendence degree of the observation algebra is $n$,
   then $p_u$ has a minimal local Nash realization of dimension $n$.
   Furthermore, the interval on which $u$ is defined can be chosen to be
   arbitrarily small.
\\
2) If $\Sigma$ is a local Nash realization of $p_u$, then
   $\Sigma$ can be transformed, by following the steps of constructive procedures, to a semi-algebraically reachable and
   observable local Nash realization of $p_{uw}$, for some input $w$.
   The interval on which $w$ is defined can be chosen to be arbitrarily small.
\\
3) A local Nash realization of $p_u$ is minimal among all
   local Nash realizations of $p_v$ (with $v$ ranging through all inputs),
   if and only if it is semi-algebraically reachable and semi-algebraically
   observable.
\\
4) If $\Sigma_1$,$\Sigma_2$ are two local Nash realizations of $p_u$, then
   we can restrict them to open subsets of their respective state-spaces such that
   the resulting systems will be isomorphic and they will be local
   Nash realizations of $p_{uw}$, for some input $w$. Moreover, the interval on which $w$
   is defined can be chosen arbitrarily small.

We expect that the obtained results will be useful for deriving analogous global results. For example,
we hope to be able to prove that semi-algebraic reachability and observability are not only necessary, see
Theorem \ref{thm:5.15}, but also sufficient conditions for minimality of Nash realizations.
Thus, the results are expected to be useful in system identification, model reduction, filtering and control design of Nash systems.

The paper is structured as follows.
In Section \ref{sec:reach} and Section \ref{sec:obs} we present the reachability and
observability reduction procedures for local Nash realizations.
In Section \ref{sec:min}, we prove the characterization of minimality and the
existence conditions discussed above.
The basic notions are introduced in Section \ref{sec:notation} and Section \ref{sec:ext}.

\section{Notation and terminology} \label{sec:notation}

Basic notation and terminology of commutative algebra and real algebraic
geometry used in this paper is adopted from \cite{zariski:samuel:1958,kunz:1985,bochnak:coste:roy:1998}.
The notions of Nash systems and their properties are borrowed from \cite{nemcova:petreczky:schuppen:2013}.
Note that we do not state all results in their full generality as they can be stated over arbitrary
real closed field instead of the field $\real$ of real numbers.

If $A$ is an integral domain over $\real$ then the transcendence degree $\trdeg~A$
of $A$ over $\real$ is defined as the transcendence degree over $\real$ of the field $F$
of fractions of $A$ and it equals the greatest number of
algebraically independent elements of $F$ over $\real$.
\par
A subset $X \subseteq \mathbb{R}^{n}$ is called
\emph{semi-algebraic} if it is a set of points of $\real^n$ which
satisfy finitely many polynomial equalities and inequalities, or
if it is a finite union of such sets.
The {\em dimension} of a semi-algebraic set $X$ is given as the
maximal length of chains of prime ideals of the ring of
polynomial functions on $X$.
\par
Let $X_1 \subseteq \real^n$ and $X_2 \subseteq \real^m$ be semi-algebraic
sets. A map $f:X_1 \rightarrow X_2$ is a {\em semi-algebraic map} if
its graph is a semi-algebraic set in $\real^{n+m}$.

\begin{definition}
A {\em Nash function} on a open semi-algebraic set $X \subseteq
\real^n$ is an analytic and semi-algebraic function from $X$ to
$\real$. We denote the ring of Nash functions on $X$ by $\Na(X)$.
\par
We say that $f:X \rightarrow \real^k$ is a Nash
function if for some $f_1,\ldots,f_k \in \Na(X)$ it holds that
$\forall x \in X: f(x)=(f_1(x),\ldots,f_k(x))$.
\end{definition}

Let $\U_{pc}$ denote the set of piecewise-constant inputs $u = (\alpha_1,t_1) \cdots (\alpha_k,t_k)$
with the input values $\alpha_1, \ldots ,\alpha_k \in U\subseteq
\real^m$ and switching times $t_1, \ldots , t_k \in [0,\infty)$. The input $u$
is defined for $t \in [0, T_u := \sum_{i=1}^k t_k ]$. We denote by $e$ the empty input, i.e. $T_e=0$.

A subset $\widetilde{\U_{pc}} \subseteq \U_{pc}$ is called an {\em admissible set of inputs},
if the following holds:
\begin{itemize}
 \item[(i)] $\forall u \in \widetilde{\U_{pc}} ~\forall t\in [0,T_u] : u_{[0,t]} \in \widetilde{\U_{pc}}$,
 \item[(ii)] $\forall u \in \widetilde{\U_{pc}} ~\forall \alpha \in U ~\exists t > 0 : (u)(\alpha,t) \in \widetilde{\U_{pc}}$,
 \item[(iii)] $\forall u = (\alpha_1,t_1) \cdots (\alpha_k,t_k) \in \widetilde{\U_{pc}}  ~\exists \delta >0 ~\forall \overline{t_i} \in [0,t_i + \delta ], i=1, \dots,k : \overline{u} = (\alpha_1,\overline{t_1}) \cdots (\alpha_k,\overline{t_k}) \in \widetilde{\U_{pc}}.$
\end{itemize}

In this paper we restrict our attention only to Nash systems with the
state-spaces defined as open connected semi-algebraic sets instead of Nash submanifolds,
see \cite{nemcova:petreczky:schuppen:2013}.

\begin{definition} \label{nash_system}
A {\em Nash system} $\Sigma$ with an input-space $U \subseteq
\real^m$ and an output-space $\real^r$ is a quadruple
$(X,f,h,x_0)$ where
\begin{itemize}
 \item[(i)] the state-space $X$ is an open connected semi-algebraic subset of $\real^n$,
 \item[(ii)] the dynamics of the system is given by $\dot{x}(t) = f(x(t),u(t))$ for an input $u \in \U_{pc}$,  where $f:X \times U \rightarrow \real^n$ is such that for every input value $\alpha \in U$ the function $f_{\alpha}:X \ni x \mapsto f(x,\alpha) \in \real^n$ is a Nash function,
 \item[(iii)] the output of the system is specified by a Nash function $h : X \rightarrow \real^r$,
 \item[(iv)] $x_0 = x(0) \in X$ is the initial state of $\Sigma$.
\end{itemize}
\end{definition}

Let $\widetilde{\U_{pc}}(\Sigma)$ denote the largest set of all inputs $u \in \U_{pc}$ such that there exists
a unique trajectory $x_{\Sigma}: [ 0,T_u ] \rightarrow X$ of $\Sigma$ corresponding to $u$ and such that
$\widetilde{\U_{pc}}(\Sigma)$ is an admissible set of inputs.
Notice that  $x_{\Sigma}$ is a continuous piecewise-differentiable function such that
$x_{\Sigma}(0;x_0,u) = x_0$ and $\frac{d}{dt} x_{\Sigma}(t;x_0,u) = f(x_{\Sigma}(t;x_0,u),u(t))$ for
$t\in (\sum_{j=0}^i t_j, \sum_{j=0}^{i+1}t_j)$, $i=0,\ldots , k-1$, $t_0 =0$.

Consider a map $p: \widetilde{\U_{pc}} \rightarrow \real$, where $\widetilde{\U_{pc}}$ is an admissible set
of inputs. For any $\alpha_1, \dots ,\alpha_k \in U$, denote by $\mathbb{T}_{\alpha_1,\dots, \alpha_k}$ the set
of all $k$-tuples $(t_1,\ldots,t_k) \in [0,+\infty)^{k}$ such
that $(\alpha_1,t_1)(\alpha_2,t_2)\cdots
(\alpha_k,t_k) \in \widetilde{\U_{pc}}$.
We say that $p$ is a {\em response map} if for all $\alpha_1,\ldots,\alpha_k \in U$, $k > 0$, the map
$p_{\alpha_1,\dots,\alpha_k}: \mathbb{T}_{\alpha_1,\dots,\alpha_k}
\rightarrow \real : (t_1, \dots ,t_k) \mapsto p((\alpha_1,t_1) \cdots (\alpha_k,t_k))$
is analytic. We use the notation $p \in \A(\widetilde{\U_{pc}} \rightarrow
\real)$. A map $p: \widetilde{\U_{pc}} \rightarrow \real^r$ is called a {\em response map} if its components $p_i$, $i=1,\dots,r$
belong to $\A(\widetilde{\U_{pc}} \rightarrow \real)$. Define $A_{obs}(p)$ as the smallest subalgebra of $\A(\widetilde{\U_{pc}} \rightarrow \real)$
which contains $p_1, \dots,p_r$ and which is closed under the derivative operator
$D_{\alpha}$ on $\A(\widetilde{\U_{pc}} \rightarrow \real)$ defined as
$D_{\alpha}f(u) = \frac{d}{dt}f(u(\alpha,t))|_{t=0+}$.

We say that a response map $p: \widetilde{\U_{pc}} \rightarrow
\real^{r}$ is \emph{locally realized} by a Nash system $\Sigma=(X,f,h,x_0)$ if
$p(u)=h(x_{\Sigma}(T_u;x_0,u))$ for all
$u \in \widetilde{\U_{pc}} \cap  \widetilde{\U_{pc}}(\Sigma)$.

\begin{proposition} \label{rem:restrict}
Let $p: \widetilde{\U_{pc}} \rightarrow \real^{r}$ be a response map and let $S \subseteq \widetilde{\U_{pc}}$ be an
admissible set of inputs.
If $p|_{S}=0$, then $p=0$.
Furthermore, $\trdeg~A_{obs}(p) = \trdeg~A_{obs}(p|_{S})$.
\end{proposition}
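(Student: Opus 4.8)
\emph{Proof plan.} The statement has two parts: a rigidity claim for response maps ($p|_S=0\Rightarrow p=0$) and, as a formal consequence, an equality of transcendence degrees. The plan is to prove the rigidity claim first, by applying the identity theorem for real-analytic functions separately to each of the maps $p_{\alpha_1,\dots,\alpha_k}$, and then to deduce the statement about observation algebras by showing that the restriction map $A_{obs}(p)\to A_{obs}(p|_S)$ is an isomorphism of $\real$-algebras.

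For the rigidity claim I would fix $\alpha_1,\dots,\alpha_k\in U$ and work with $p_{\alpha_1,\dots,\alpha_k}$, which is real-analytic on $\mathbb{T}_{\alpha_1,\dots,\alpha_k}\subseteq[0,\infty)^k$. Admissibility axiom (iii) for $\widetilde{\U_{pc}}$ shows that this domain is downward closed --- with a tuple it contains every coordinatewise-smaller nonnegative tuple --- and, applied at the origin, that it contains a box $[0,\delta]^k$; hence it is star-shaped with respect to $\mathbf{0}$, its interior is a connected open subset of $\real^k$, and its closure contains $\mathbb{T}_{\alpha_1,\dots,\alpha_k}$. On the other hand, iterating axiom (ii) for $S$ from the empty input produces positive times $t_1,\dots,t_k$ with $(\alpha_1,t_1)\cdots(\alpha_k,t_k)\in S$, and then axiom (iii) for $S$ places a full box $\prod_{i=1}^{k}[0,t_i+\delta]$, which has nonempty interior in $\real^k$, inside $S$. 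Since $p$ vanishes on $S$, the analytic function $p_{\alpha_1,\dots,\alpha_k}$ vanishes on that box; by the identity theorem on the connected interior of $\mathbb{T}_{\alpha_1,\dots,\alpha_k}$, together with continuity up to the boundary, it vanishes identically. As $k$ and $\alpha_1,\dots,\alpha_k$ were arbitrary, $p=0$.

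For the second part I would introduce the restriction map $\rho\colon A_{obs}(p)\to\A(S\rightarrow\real)$, $f\mapsto f|_{S}$. It is a homomorphism of $\real$-algebras, and it commutes with each $D_{\alpha}$: for $u\in S$, axiom (ii) for $S$ gives $u(\alpha,t)\in S$ for all small $t>0$, so $D_{\alpha}(f|_{S})(u)=\frac{d}{dt}f(u(\alpha,t))|_{t=0+}=(D_{\alpha}f)|_{S}(u)$. Hence $\rho$ carries the generators $p_{1},\dots,p_{r}$ of $A_{obs}(p)$ to $p_{1}|_{S},\dots,p_{r}|_{S}$ and intertwines the operators $D_{\alpha}$, so its image is exactly $A_{obs}(p|_{S})$. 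Moreover every $f\in A_{obs}(p)\subseteq\A(\widetilde{\U_{pc}}\rightarrow\real)$ is itself a scalar response map, so the rigidity claim applies to it and $f|_{S}=0$ forces $f=0$; thus $\Ker\rho=0$ and $\rho$ is an isomorphism $A_{obs}(p)\cong A_{obs}(p|_{S})$. Since transcendence degree is invariant under $\real$-algebra isomorphism, $\trdeg~A_{obs}(p)=\trdeg~A_{obs}(p|_{S})$.

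The only genuinely non-formal step, and the one I expect to be the main obstacle, is the topology of the time-domains $\mathbb{T}_{\alpha_1,\dots,\alpha_k}$: one must squeeze out of the admissibility axioms both that (the interior of) this set is connected, so that the identity theorem is applicable, and that its intersection with $S$ has nonempty interior, so that there is an open set on which the vanishing can be started; both rest on axiom (iii). Once those points are settled, the remainder --- the surjectivity of $\rho$ onto $A_{obs}(p|_{S})$, the computation $\Ker\rho=0$, and the invariance of $\trdeg$ --- is routine algebraic bookkeeping, with no use of the specifically Nash (semi-algebraic) nature of the functions.
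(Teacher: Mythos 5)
Your proposal is correct and follows essentially the same route as the paper: the paper also proves the vanishing claim by using the admissibility axioms to place a full box of small switching times inside $S$, invoking analyticity of $p_{\alpha_1,\dots,\alpha_k}$ to conclude $p=0$, and then deduces the transcendence-degree equality from the fact that the restriction map sends $A_{obs}(p)$ isomorphically onto $A_{obs}(p|_S)$. Your write-up merely fills in details the paper leaves implicit (connectedness of the interior of $\mathbb{T}_{\alpha_1,\dots,\alpha_k}$ and the intertwining of the $D_\alpha$ operators), which is consistent with, not divergent from, the paper's sketch.
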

\begin{proof}
Consider the map $\Psi: (\A(\widetilde{\U_{pc}} \rightarrow \real))^r \rightarrow (\A(S \rightarrow \real))^r$ defined as
$\Psi(p) = p|_{S}$. It is an algebraic isomorphism. Let us sketch the proof of injectivity of $\Psi$.
Assume $p|_{S} = 0$. That is $p(u) = 0$ for all $u \in S$. In particular, from the definition of an admissible set of inputs,
for any integer $k > 0$, $\alpha_1,\ldots,\alpha_k \in U$, there exists $\bar{t}_i > 0$,
$i=1,\ldots,k$ such that for all $t_i \in [0,\bar{t}_i]$, $i=1,\ldots,k$,
the input $v=(\alpha_1,t_1)\cdots (\alpha_k,t_k)$ satisfies
$v \in S$ and $p(v)=0$. From the analyticity of $p$ it follows that $p=0$.
\par
Then, $\Psi(A_{obs}(p)) = A_{obs}(p|_{S})$ which implies $\trdeg~A_{obs}(p) = \trdeg~A_{obs}(p|_{S})$.
\end{proof}

Consider a response map $p: \widetilde{\U_{pc}} \rightarrow \real^{r}$ and $u \in \widetilde{\U_{pc}}$. Denote by $p_u$ the map
$p_u:\widetilde{\U_{pc}}^{u} \rightarrow \real^r$, where
$\widetilde{\U_{pc}}^{u}=\{v \in \widetilde{\U_{pc}} \mid uv \in \widetilde{\U_{pc}}\}$, such that
$\forall v \in \widetilde{\U_{pc}}^u\ \ :  p_u(v) = p(uv)$.
It is easy to see that $p_u$ is again a response map.

\begin{definition} \label{def:obs}
We say a Nash system $\Sigma = (X,f,h,x_0)$ is {\em
semi-algebraically observable} if $\trdeg~A_{obs}(\Sigma)=\trdeg~\Na(X)$.
Observation algebra $A_{obs}(\Sigma)$ is defined as the smallest subalgebra of $\Na(X)$
which contains all components of $h$ and which is closed under taking Lie derivatives
with respect to vector fields $f_{\alpha}$, $\alpha \in U$.
\end{definition}

\begin{definition} \label{def:algreachable}
We say a Nash system $\Sigma = (X,f,h,x_0)$ is
\emph{semi-algebraically reachable} if
\[\forall g \in \Na(X) : \left( g = 0 ~\mbox{on}~ \reach(x_0) \Rightarrow g = 0 \right), \]
where $\reach(x_0)$ denotes the set of states of $\Sigma$
reachable from $x_0$ by the inputs of $\widetilde{\U_{pc}}(\Sigma)$, i.e.
$\reach(x_0) = \{x_{\Sigma}(T_u;x_0,u) \mid u \in \widetilde{\U_{pc}}(\Sigma)\}$.
\end{definition}

This definition differs from the one introduced in \cite{nemcova:petreczky:schuppen:2013}. There
we consider the inputs of $\widetilde{\U_{pc}} \subseteq \widetilde{\U_{pc}}(\Sigma)$ instead
of $\widetilde{\U_{pc}}(\Sigma)$ to define reachable states. However, Proposition \ref{prop2} stated
below yields that Definition \ref{def:algreachable} and the corresponding definition in \cite{nemcova:petreczky:schuppen:2013} are the same.

Let $\Sigma$ be a local Nash realization of a response map
$p:\widetilde{\U_{pc}} \rightarrow \mathbb{R}^r$. Define
the \emph{state-to-output map} $\tau_{\Sigma,p}^{*}:\Na(X) \rightarrow \A(\widetilde{\U_{pc}} \cap \widetilde{\U_{pc}}(\Sigma) \rightarrow \real)$
as follows: for any $g \in \Na(X)$, and any
$u \in \widetilde{\U_{pc}} \cap \widetilde{\U_{pc}}(\Sigma)$,
$\tau_{\Sigma,p}^{*}(g)=g(x_{\Sigma}(T_u;x_0,u))$.
The map $\tau_{\Sigma,p}^{*}$ is closely related to semi-algebraic reachability and
observability of $\Sigma$.

\begin{assumption}
In the sequel, we will assume that the set $U$ of input values is finite.
\end{assumption}

\section{Extension to negative times} \label{sec:ext}

In this section we define the extension of
response maps to negative switching times. Notice that if a response map $p$ has a
realization by a Nash system $\Sigma$, then one can extend $p$ to act on piecewise-constant
inputs where the duration of inputs is allowed to be negative. More precisely,
let $\Sigma=(X,f,h,x_0)$ be a Nash system.
For any $\alpha \in U$,
$t \in \mathbb{R}$, denote by $f_{\alpha}^t(x)$ the flow at time $t$ of the vector field
$f_{\alpha}$ from the state $x$.
If $z=f^{t}_{\alpha}(x)$, then $f^{-t}_{\alpha}(z)$ is well defined.

Let $\U_{pc}^{-}$ denote the set of all sequences of the form
$u=(\alpha_1,t_1)\cdots (\alpha_k,t_k)$, where $\alpha_1,\ldots,\alpha_k \in U$ and
$t_1,\ldots,t_k \in \mathbb{R}$. We call such sequences \emph{generalized inputs}.
If $u, v \in \U_{pc}^{-}$ then $uv$ denotes concatenation of $u$ and $v$.
By $\stackrel{\leftarrow}{u}$ we denote the sequence
$(\alpha_k,-t_k)\cdots (\alpha_1,-t_1)$. Notice that $u \stackrel{\leftarrow}{u} = \stackrel{\leftarrow}{u} u = e \in \U_{pc}^{-}$.

A set $\overline{\U_{pc}} \subseteq \U^{-}_{pc}$ is said to be an {\em admissible set of generalized inputs}, if the following holds:
\begin{itemize}
 \item[(i)] $\forall u = (\alpha_1,t_1)\cdots (\alpha_k,t_k) \in \overline{\U_{pc}} ~\forall i\in \{1,\dots,k\} \, \forall t\in I(0,t_i): (\alpha_1,t_1)\cdots (\alpha_{i-1},t_{i-1})(\alpha_i,t) \in \overline{\U_{pc}}$,
 \item[(ii)] $\forall u \in \overline{\U_{pc}} \,\forall \alpha \in U \,\exists \delta > 0 \,\forall t \in (-\delta,\delta): (u)(\alpha,t) \in \overline{\U_{pc}}$,
 \item[(iii)] $\forall u = (\alpha_1,t_1) \cdots (\alpha_k,t_k) \in \overline{\U_{pc}}  ~\exists \delta >0 ~\forall \overline{t_i} \in I(0,(\sgn \,t_i)(|t_i| + \delta)), i=1, \dots,k : \overline{u} = (\alpha_1,\overline{t_1}) \cdots (\alpha_k,\overline{t_k}) \in \overline{\U_{pc}},$
\end{itemize}
where $I(0,a)$ denotes either the interval $[0,a]$ for $a\geq 0$ or the interval $[a,0]$ for $a< 0$.

By $\overline{\U_{pc}}(\Sigma)$ we denote the largest admissible set of inputs of $\U_{pc}^-$ such that $\forall u =(\alpha_1,t_1)\cdots (\alpha_k,t_k) \in \U_{pc}^{-} : x_{\Sigma}(T_u;x_0,u) := f_{\alpha_k}^{t_k} \circ \cdots \circ f_{\alpha_1}^{t_1}(x_0)$ is well defined.

For every $\alpha_1,\ldots,\alpha_k \in U$, denote by
$\overline{\mathbb{T}}_{\alpha_1,\ldots,\alpha_k}$ the set of all $(t_1,\ldots,t_k) \in \mathbb{R}^k$,
such that $(\alpha_1,t_1)\cdots (\alpha_k,t_k) \in \overline{\U_{pc}}$.
We say that a map $\bar{p}:\overline{\U_{pc}} \rightarrow \real$ is a \emph{generalized response map} if
\begin{itemize}
\item[(i)]
 $\bar{p}_{\alpha_1,\ldots,\alpha_k}: \overline{\mathbb{T}}_{\alpha_1,\ldots,\alpha_k} \rightarrow \real : (t_1,\ldots,t_k) \mapsto \bar{p}((\alpha_1,t_1)\cdots (\alpha_k,t_k))$ is analytic,
\item[(ii)]
  for all $u,v$ and for any $\alpha \in U$,
  $\bar{p}(u(\alpha,0)v)=\bar{p}(uv)$ and
  $\bar{p}(u(\alpha,t_1)(\alpha,t_2)v)=\bar{p}(u(\alpha,t_1+t_2)v)$,
  provided $u(\alpha,0)v,u(\alpha,t_1)(\alpha,t_2)v,u(\alpha,t_1+t_2)v,uv \in \overline{\U_{pc}}$.
\end{itemize}
We say that a map $\bar{p}:\overline{\U_{pc}} \rightarrow \real^r$ is a generalized response map,
if all its components $\bar{p}_1,\ldots,\bar{p}_r$ are generalized response maps according to the definition above.

Denote by $\A(\overline{\U_{pc}} \rightarrow \real)$ the set of all generalized response maps on $\overline{\U_{pc}}$.
It is easy to see that $\A(\overline{\U_{pc}} \rightarrow \real)$ is an algebra
with respect to point-wise addition and multiplication. Similarly to \cite{ja:2008b}
it can be shown that $\A(\overline{\U_{pc}} \rightarrow \real)$ is an integral domain.

\begin{definition} \label{def:negext}
Let $\widetilde{\U_{pc}}$ be an admissible set of inputs and
let $p:\widetilde{\U_{pc}} \rightarrow \real^r$ be a response map.
We say that $p$ has an \emph{extension to negative times}, if there exists an
admissible set of generalized inputs $\overline{\U_{pc}}$ and a generalized response map $\bar{p}:\overline{\U_{pc}} \rightarrow \real^r$ such that
\\
1) $\widetilde{\U_{pc}} \subseteq \overline{\U_{pc}}$, and 2) the restriction of $\bar{p}$ to $\widetilde{\U_{pc}}$ equals $p$.
\end{definition}

\begin{assumption}\label{assum0}
In the sequel, we will assume that the response map $p$ has an extension to negative times.
\end{assumption}
Assumption \ref{assum0} is not restrictive: if $p$ has a local realization by a  Nash system
$\Sigma$, then set $\overline{\U_{pc}} := \overline{\U_{pc}}(\Sigma)$ and define
$\bar{p}(u)=h(x_{\Sigma}(T_u;x_0,u))$ for all $u \in \overline{\U_{pc}}$.
It is then clear that $\bar{p}|_{\widetilde{\U_{pc}}}$ satisfies Definition \ref{def:negext}.

Assume now that $p:\widetilde{\U_{pc}} \rightarrow \real^r$ has an extension $\bar{p}$ to negative times.
Let $\overline{\U_{pc}}$ be the domain of the map
$\bar{p}$.
Define the map $\Psi:\A(\overline{\U_{pc}} \rightarrow \real) \rightarrow \A(\widetilde{\U_{pc}} \rightarrow \real)$ such that $\Psi(g)$ is the restriction of $g$ to the set
$\widetilde{\U_{pc}}$. It is clear that $\Psi$ is an injective algebra morphism.
Define the derivative operator $D_{\alpha}$ on $\A(\overline{\U_{pc}} \rightarrow \real)$
as $D_{\alpha} f(u)=\frac{d}{dt} f(u(\alpha,t))|_{t=0}$.
Define $A_{obs}(\bar{p})$ as the smallest subalgebra of
$\A(\overline{\U_{pc}} \rightarrow \real)$ which contains $\bar{p}_1,\ldots,\bar{p}_d$
and which is closed under the operator $D_{\alpha}$.
It then follows that $\Psi(A_{obs}(\bar{p}))=A_{obs}(p)$ and
$\Psi:A_{obs}(\bar{p}) \rightarrow A_{obs}(p)$ is an isomorphism.
In particular, $\trdeg ~A_{obs}(p)=\trdeg ~A_{obs}(\bar{p})$.

For any $u \in \overline{\U_{pc}}$, consider the map
$\Phi_u:\A(\overline{\U_{pc}} \rightarrow \real) \rightarrow \A(\overline{\U_{pc}}^{u} \rightarrow \real)$ defined as
$\Phi_u(f)(v)=f(uv)$, where $\overline{\U_{pc}}^{u} = \{v \in \U_{pc}^{-} \mid uv \in \overline{\U_{pc}}\}$.
It is clear that $\Phi_u$ is an algebraic map.
Moreover, $D_{\alpha} \circ \Phi_u=\Phi_u \circ D_{\alpha}$ and thus
$\Psi \circ \Phi_u \circ \Psi^{-1} (A_{obs}(p))=A_{obs}(p_u)$.
It can be shown that $\Phi_u$ is injective, see \cite{ArxivePaper}.
From this it follows that $\Psi \circ \Phi_u \circ \Psi^{-1}:A_{obs}(p) \rightarrow A_{obs}(p_u)$ is an
algebra isomorphism.
Indeed, if $\Phi_u(f)=0$, then $f(u\stackrel{\leftarrow}{u}v)=f(v)=0$ for all
$v \in \overline{\U_{pc}}^{u}$. Notice that for any $\alpha_1,\ldots,\alpha_k \in U$,
there exists an open subset $V \subseteq \real^k$ such that
$(\alpha_1,\tau_1)\cdots (\alpha_k,\tau_k) \in \overline{\U_{pc}}$ for all
$(\tau_1,\ldots,\tau_k) \in V$. Since the map
$f_{\alpha_1,\ldots,\alpha_k}(t_1,\ldots,t_k)=f((\alpha_1,t_1)\cdots (\alpha_k,t_k))$ is
analytic in $t_1,\ldots,t_k$ and $f_{\alpha_1,\ldots,\alpha_k}$ is zero on an open
subset of $V$, and the domain of $f_{\alpha_1,\ldots,\alpha_k}$ is connected, it
follows that $f_{\alpha_1,\ldots,\alpha_k}=0$. Since $\alpha_1,\ldots,\alpha_k$ were
arbitrary, it then implies that $f$ is zero, i.e. $\Ker ~\Phi_u=\{0\}$.
The discussion above is summarized in the following corollary.

\begin{corollary} \label{col1}
Let $p:\widetilde{\U_{pc}} \rightarrow \real^r$ be a response map. For any $u \in \widetilde{\U_{pc}}$, $A_{obs}(p)$ and $A_{obs}(p_u)$ are isomorphic.
In particular, $\trdeg~ A_{obs}(p)=\trdeg~ A_{obs}(p_u)$.
\end{corollary}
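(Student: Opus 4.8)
\emph{Proof plan.} The statement is essentially an assembly of the facts established in the discussion preceding it, so the plan is to organize those facts into a single chain of algebra isomorphisms and then invoke invariance of transcendence degree. By Assumption~\ref{assum0}, $p$ admits an extension $\bar p:\overline{\U_{pc}}\to\real^r$ to negative times, and since $\widetilde{\U_{pc}}\subseteq\overline{\U_{pc}}$ (Definition~\ref{def:negext}) and $u\in\widetilde{\U_{pc}}$, the shift $\Phi_u$ is defined. First I would record the three isomorphisms in play: the restriction map restricts to an algebra isomorphism $\Psi:A_{obs}(\bar p)\to A_{obs}(p)$ (injectivity via analyticity, surjectivity by construction of $A_{obs}$); applied to the shifted generalized response map, it likewise restricts to an isomorphism $\Psi_u:A_{obs}(\bar p_u)\to A_{obs}(p_u)$ onto the corresponding observation algebra on $\widetilde{\U_{pc}}^{\,u}$; and $\Phi_u$ is an algebra homomorphism on $\A(\overline{\U_{pc}}\to\real)$ that commutes with every $D_\alpha$ and sends each $\bar p_i$ to $(\bar p_u)_i$, hence maps $A_{obs}(\bar p)$ onto $A_{obs}(\bar p_u)$.

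The only non-formal point is the injectivity of $\Phi_u$ on $A_{obs}(\bar p)$, which I would argue exactly as sketched before the corollary: if $\Phi_u(f)=0$ then, using $u\stackrel{\leftarrow}{u}=e$, one gets $f(v)=f(u\stackrel{\leftarrow}{u}v)=\Phi_u(f)(\stackrel{\leftarrow}{u}v)=0$ for all generalized inputs $v$ with $\stackrel{\leftarrow}{u}v\in\overline{\U_{pc}}^{\,u}$; by admissibility of $\overline{\U_{pc}}$ this set of $v$ contains, for every fixed input-value sequence $\alpha_1,\dots,\alpha_k$, an open set of switching times, so each analytic function $f_{\alpha_1,\dots,\alpha_k}$ vanishes on a nonempty open subset of its connected domain $\overline{\mathbb T}_{\alpha_1,\dots,\alpha_k}$ and is therefore identically zero; since the sequences were arbitrary, $f=0$. (This is the step that carries the real content; the reference \cite{ArxivePaper} treats it in detail.) Consequently $\Phi_u:A_{obs}(\bar p)\to A_{obs}(\bar p_u)$ is an isomorphism.

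Composing, $\Psi_u\circ\Phi_u\circ\Psi^{-1}:A_{obs}(p)\to A_{obs}(p_u)$ is an algebra isomorphism, which proves the first assertion. For the second, both $A_{obs}(p)$ and $A_{obs}(p_u)$ are subalgebras of $\A(\widetilde{\U_{pc}}\to\real)$ resp.\ $\A(\widetilde{\U_{pc}}^{\,u}\to\real)$, which are integral domains over $\real$ (shown analogously to \cite{ja:2008b}); the transcendence degree over $\real$ of an integral domain over $\real$ is the transcendence degree of its field of fractions, and an algebra isomorphism induces an isomorphism of fraction fields, so $\trdeg~A_{obs}(p)=\trdeg~A_{obs}(p_u)$. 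I expect the injectivity of $\Phi_u$ — specifically, exhibiting enough switching times for which both $v$ and $\stackrel{\leftarrow}{u}v$ stay admissible, so that analytic continuation applies — to be the only delicate point; the rest is bookkeeping of compositions already available.
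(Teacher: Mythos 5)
Your proposal is correct and follows essentially the same route as the paper: the paper's ``proof'' of Corollary~\ref{col1} is precisely the discussion preceding it, namely the chain $\Psi\circ\Phi_u\circ\Psi^{-1}$ together with the injectivity of $\Phi_u$ via $f(v)=f(u\stackrel{\leftarrow}{u}v)=0$ and analytic continuation on the connected domains $\overline{\mathbb{T}}_{\alpha_1,\ldots,\alpha_k}$. Your only (welcome) refinement is to distinguish the two restriction maps $\Psi$ and $\Psi_u$ explicitly, where the paper abuses notation by writing $\Psi$ for both.
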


\section{Reachability reduction} \label{sec:reach}

\begin{procedure} \label{proc:reach}
\emph{
Let $\Sigma = (X \subseteq \real^n,f,h,x_0)$ be a local Nash realization
of $p:\widetilde{\U_{pc}} \rightarrow \real^r$ and let $\epsilon >0$ be arbitrary.
\begin{enumerate}
\item Consider the ideal $I$ of functions of $\Na(X)$ which vanish on $\reach(x_0)$.
      Choose a transcendence basis $\varphi_1, \ldots , \varphi_d \in \Na(X)$ of $\Na(X)/I$. Denote $\Phi = (\varphi_1, \ldots , \varphi_d)$.
\item Let $\pi_i$, $i=1,\ldots, n$ be the coordinate functions on $X$. Find non-zero polynomials $Q_i \in \real[T,T_1,\ldots, T_d]$, $i=1,\ldots,n$ with the smallest possible degree and an input $u \in \widetilde{\U_{pc}}(\Sigma) \cap \widetilde{\U_{pc}}$ such that $T_u < \epsilon$, $Q_i(\pi_i, \Phi) = 0$ on $\reach(x_0)$ and $\frac{d}{dt} Q_i(\pi_i(x_{\Sigma}(T_u;x_0,u)), \Phi(x_{\Sigma}(T_u;x_0,u) ) )  \neq 0$.
\item By applying implicit function theorem, find open semi-algebraic subsets $V \subseteq \real^d$, and
  $W \subseteq \real^n$ and a Nash map $G:V \rightarrow W$ such that
\\(1) $\Phi(x_{\Sigma}(T_u;x_0,u)) \in V$ and $x_{\Sigma}(T_u;x_0,u) \in W$,
\\(2) $G(\Phi(x_{\Sigma}(T_{uv},x_0,uv))) = x_{\Sigma}(T_{uv},x_0,uv)$
  for every $v$ such that for all $t \in [0,T_v]$,
  $x_{\Sigma}(T_{u}+t;x_0,uv) \in W$.
\item Then the system $\Sigma_V = (V,f^V,h^V,x_0^V)$, where $f^V_{\alpha}(z)=D\Phi(G(z))f_{\alpha}(G(z))$, $h^V(z)=h(G(z))$ for $z \in V$ and $x^{V}_0=\Phi(x_{\Sigma}(T_u;x_0,u))$, is a semi-algebraically reachable local Nash realization of $p_u$.
\end{enumerate}
}
\end{procedure}
To prove the correctness of the procedure, let us first present the following
alternative characterization of semi-algebraic reachability.

\begin{proposition} \label{reach:prop1}
A Nash system $\Sigma = (X,f,h,x_0)$ is semi-algebraically reachable if and only if
$\dim X = \trdeg ~\Na(X)/I$, where $I \subseteq \Na(X)$ is the ideal of functions which vanish on $\reach(x_0)$.
The ideal $I$ is prime.
\end{proposition}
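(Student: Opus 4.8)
The plan is to recast the equivalence as an algebraic statement about $\Na(X)$ and $I$, to prove primeness of $I$ via the negative-time extension of Section~\ref{sec:ext}, and then to settle the two implications by a short transcendence-degree argument. Since $X$ is open in $\real^n$, standard facts about rings of Nash functions (see \cite{nemcova:petreczky:schuppen:2013}) give $\dim X = n$ and $\trdeg\,\Na(X)=n$, the latter because every Nash function on $X$ is algebraic over $\real(\pi_1,\dots,\pi_n)$; in particular $\trdeg\,\Na(X)/I\le n$ always. By Definition~\ref{def:algreachable}, semi-algebraic reachability of $\Sigma$ means exactly $I=\{0\}$. So it suffices to prove (a) that $I$ is prime (which also makes $\trdeg\,\Na(X)/I$ meaningful), and (b) that $I=\{0\}$ if and only if $\trdeg\,\Na(X)/I=n$.

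For (a) I would pass to the negative-time extension and consider the generalized state-to-output map $\tau:\Na(X)\rightarrow\A(\overline{\U_{pc}}(\Sigma)\rightarrow\real)$ defined by $\tau(g)(u)=g(x_\Sigma(T_u;x_0,u))$. This is a well-defined algebra morphism, since the flow composition $f^{t_k}_{\alpha_k}\circ\cdots\circ f^{t_1}_{\alpha_1}(x_0)$ is analytic in $(t_1,\dots,t_k)$ and satisfies $f_\alpha^0=\mathrm{id}$ and $f_\alpha^{t_1}\circ f_\alpha^{t_2}=f_\alpha^{t_1+t_2}$, so $\tau(g)$ is a generalized response map. Because $\A(\overline{\U_{pc}}(\Sigma)\rightarrow\real)$ is an integral domain, $\Ker\,\tau$ is prime, and it remains to show $\Ker\,\tau=I$. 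One inclusion follows from $\widetilde{\U_{pc}}(\Sigma)\subseteq\overline{\U_{pc}}(\Sigma)$. For the other, fix $g\in I$ and a generalized reachable state $z=f^{t_k}_{\alpha_k}\circ\cdots\circ f^{t_1}_{\alpha_1}(x_0)$; the function $(s_1,\dots,s_k)\mapsto g(f^{s_k}_{\alpha_k}\circ\cdots\circ f^{s_1}_{\alpha_1}(x_0))$ is analytic on a connected open set of parameters (connectedness of $\overline{\mathbb{T}}_{\alpha_1,\dots,\alpha_k}$ coming from admissibility axiom (iii), whose boxes all have a corner at the origin) and vanishes on the full-dimensional box of short nonnegative switching times provided by admissibility of $\widetilde{\U_{pc}}(\Sigma)$; by analytic continuation it vanishes at $(t_1,\dots,t_k)$, hence $g(z)=0$. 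Thus $\Ker\,\tau=I$ is prime.

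For (b), the forward direction is trivial: $I=\{0\}$ gives $\Na(X)/I\cong\Na(X)$, of transcendence degree $n$. For the converse, assume $\trdeg\,\Na(X)/I=n$ and suppose, for contradiction, that $0\neq g\in I$. Fix a transcendence basis $\varphi_1,\dots,\varphi_n\in\Na(X)$ of $\Na(X)/I$, so $\varphi_1+I,\dots,\varphi_n+I$ are algebraically independent over $\real$. Since $\trdeg\,\Na(X)=n$, the $n+1$ elements $g,\varphi_1,\dots,\varphi_n$ are algebraically dependent, so there is a nonzero $R=\sum_{j=0}^m R_j(T_1,\dots,T_n)S^j$ with $R_m\neq 0$, $m=\deg_S R$ minimal, and $R(g,\varphi_1,\dots,\varphi_n)=0$ in $\Na(X)$. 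If $R_0=0$ then $R=S\cdot R'$ with $R'\neq 0$, $\deg_S R'=m-1$, and cancelling $g\neq 0$ in the integral domain $\Na(X)$ yields $R'(g,\varphi_1,\dots,\varphi_n)=0$, contradicting minimality; hence $R_0\neq 0$. Restricting $R(g,\varphi_1,\dots,\varphi_n)=0$ to $\reach(x_0)$, where $g\equiv 0$, leaves $R_0(\varphi_1,\dots,\varphi_n)=0$ on $\reach(x_0)$, i.e. $R_0(\varphi_1,\dots,\varphi_n)\in I$ with $R_0\neq 0$, contradicting the algebraic independence of $\varphi_1+I,\dots,\varphi_n+I$. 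Hence $I=\{0\}$, i.e. $\Sigma$ is semi-algebraically reachable.

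The step I expect to cause the most trouble is the identification $\Ker\,\tau=I$ inside (a): the primeness of $\Ker\,\tau$ is automatic once the codomain is known to be a domain, but one must rule out that $\Ker\,\tau$ is strictly smaller than $I$ — that is, one must show that a Nash function vanishing on $\reach(x_0)$ already vanishes on the negative-time reachable set. This is exactly where the connectedness of the parameter domains $\overline{\mathbb{T}}_{\alpha_1,\dots,\alpha_k}$ and the analytic-continuation argument from the positive-time box are needed, and where the admissibility axioms for generalized inputs from Section~\ref{sec:ext} are genuinely used. The rest is routine given $\dim X=\trdeg\,\Na(X)=n$.
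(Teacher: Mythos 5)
Your proof is correct and follows the same overall skeleton as the paper's: reduce the equivalence to ``$I=\{0\}$ iff $\trdeg\,\Na(X)/I=\trdeg\,\Na(X)$'' and obtain primeness of $I$ by exhibiting it as the kernel of a state-to-output morphism into an integral domain of response maps. You deviate in two places. For primeness, you route the morphism through the negative-time extension $\overline{\U_{pc}}(\Sigma)$, which forces you to prove the extra identification $\Ker\,\tau=I$ by an analytic-continuation argument (essentially the content of Proposition~\ref{prop2}, which the paper states without proof); the paper instead uses the positive-time map $\tau_{\Sigma}^{*}$ on $\widetilde{\U_{pc}}(\Sigma)$, for which $\Ker\,\tau_{\Sigma}^{*}=I$ is immediate from the definition of $\reach(x_0)$, and invokes the same integral-domain property of the algebra of response maps. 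Your detour is harmless but buys nothing here, and it makes the step you yourself flag as delicate entirely avoidable. For the implication $\trdeg\,\Na(X)/I=\trdeg\,\Na(X)\Rightarrow I=\{0\}$, the paper cites \cite[Chapter II, Theorem 28, 29]{zariski:samuel:1958} applied to $\Na(X)/I\cong\tau_{\Sigma}^{*}(\Na(X))$, whereas you give a self-contained minimal-degree argument: any $0\neq g\in I$ satisfies $\sum_{j}R_j(\varphi_1,\dots,\varphi_n)\,g^{j}=0$ with $R_0\neq 0$ (after cancelling $g$ in the integral domain $\Na(X)$), and restricting to $\reach(x_0)$ puts $R_0(\varphi_1,\dots,\varphi_n)$ into $I$, contradicting the algebraic independence of the $\varphi_i+I$. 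That argument is correct and arguably preferable, since it works directly in $\Na(X)$ and sidesteps any finiteness hypotheses one might worry about when invoking the cited commutative-algebra theorems for the (non-finitely-generated) ring $\Na(X)$.
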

\begin{proof}
We first show that $I$ is a prime ideal. To this end, recall from
Section \ref{sec:notation} the definition of the map
$\tau_{\Sigma}^{*}$ which maps every $g \in \Na(X)$ to the
response map $\tau_{\Sigma}^{*}(g):\widetilde{\U_{pc}}(\Sigma) \rightarrow \real$
defined as $\tau_{\Sigma}^{*}(g)(u)=g(x_{\Sigma}(T_u;x_0,u))$.
Recall that $\widetilde{\U_{pc}}(\Sigma)$ is an admissible set of inputs and hence
by \cite{ja:2008b}, the set of all response maps
$\A(\widetilde{\U_{pc}}(\Sigma) \rightarrow  \real)$ is an integral domain.
Notice that $\tau_{\Sigma}^{*}:\Na(X) \rightarrow \A(\widetilde{\U_{pc}}(\Sigma) \rightarrow \real)$ is an algebra morphism.
It is easy to see that $\Ker ~\tau_{\Sigma}^{*}=I$. Since $\tau_{\Sigma}^{*}$ is an algebraic morphism
between integral domains, its kernel must be a prime ideal: indeed, if
$g_1g_1 \in \Ker ~\tau_{\Sigma}^{*}=I$, then
$\tau_{\Sigma}^{*}(g_1g_2)=\tau_{\Sigma}^{*}(g_1)\tau_{\Sigma}^{*}(g_2)=0$, from
which it follows that either $\tau_{\Sigma}^{*}(g_1)=0$ or
$\tau_{\Sigma}^{*}(g_2)=0$, i.e. $g_1 \in I$ or $g_2 \in I$.

If $\Sigma$ is semi-algebraically reachable then clearly $I=\{0\}$ and thus $\dim X=\trdeg ~\Na(X)$.

Assume that $\dim X = \trdeg ~\Na(X)/I$. Notice that
$\Na(X)/I$ is an algebra which is isomorphic to
$\tau_{\Sigma}^{*}(\Na(X))$.
Hence, $\dim X=\trdeg~ \Na(X)=\trdeg~ \tau_{\Sigma}^{*}(\Na(X))$.
Then from \cite[Chapter II, Theorem 28,29]{zariski:samuel:1958} it follows that $\tau_{\Sigma}^{*}$ is injective, i.e. $I=\Ker ~\tau_{\Sigma}^{*}=\{0\}$.
\end{proof}

From the analyticity of Nash functions one derives the following:
\begin{proposition} \label{prop2}
Let $\Sigma = (X,f,h,x_0)$ be a Nash system. Let $\widetilde{\U_{pc}}$ be an admissible set of inputs
such that $\widetilde{\U_{pc}} \subseteq \overline{\U_{pc}}(\Sigma)$. Then for any $g \in \Na(X)$
such that $\forall u \in \widetilde{\U_{pc}}: g(x_{\Sigma}(T_u;x_0,u))=0$ it holds that
$\forall u \in \overline{\U_{pc}}(\Sigma): g(x_{\Sigma}(T_u;x_0,u))=0$.
\end{proposition}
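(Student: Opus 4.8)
The plan is to exploit the fact that both the set of states reachable by inputs in $\widetilde{\U_{pc}}$ and the set reachable by generalized inputs in $\overline{\U_{pc}}(\Sigma)$ are generated by flowing along the same finitely many vector fields $f_\alpha$, $\alpha \in U$, and that the function $t \mapsto g(f^{t_k}_{\alpha_k} \circ \cdots \circ f^{t_1}_{\alpha_1}(x_0))$ is analytic in $(t_1,\ldots,t_k)$ wherever it is defined. First I would fix an arbitrary generalized input $u = (\alpha_1,t_1)\cdots(\alpha_k,t_k) \in \overline{\U_{pc}}(\Sigma)$ and consider the map $\gamma_{\alpha_1,\ldots,\alpha_k}: (s_1,\ldots,s_k) \mapsto g(f^{s_k}_{\alpha_k} \circ \cdots \circ f^{s_1}_{\alpha_1}(x_0))$. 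By the definition of $\overline{\U_{pc}}(\Sigma)$ its natural domain $D \subseteq \real^k$ is open, and since $X$ is connected and the flows are analytic, $D$ is connected; hence $\gamma_{\alpha_1,\ldots,\alpha_k}$ is an analytic function on the connected open set $D$, with $(t_1,\ldots,t_k) \in D$.

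Next I would locate an open subset of $D$ on which $\gamma_{\alpha_1,\ldots,\alpha_k}$ vanishes, using the hypothesis that $g$ vanishes along all trajectories driven by inputs from $\widetilde{\U_{pc}}$. Here the key point is condition (iii) in the definition of an admissible set of generalized inputs together with the fact that $\widetilde{\U_{pc}}$ is admissible in the ordinary sense: applying property (ii) of admissible sets of inputs repeatedly, one produces, for the given sequence $\alpha_1,\ldots,\alpha_k$, positive numbers $\bar s_1,\ldots,\bar s_k$ such that $(\alpha_1,s_1)\cdots(\alpha_k,s_k) \in \widetilde{\U_{pc}}$ for all $s_i \in [0,\bar s_i]$. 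For such inputs $x_\Sigma(T_\cdot;x_0,\cdot)$ agrees with the iterated flow (this is where $\widetilde{\U_{pc}} \subseteq \overline{\U_{pc}}(\Sigma)$ enters, guaranteeing the flows are defined and the two notions of trajectory coincide), so $\gamma_{\alpha_1,\ldots,\alpha_k} \equiv 0$ on the nonempty open box $\prod_i (0,\bar s_i)$, which is contained in $D$.

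The final step is the identity theorem for real-analytic functions: an analytic function on a connected open set that vanishes on a nonempty open subset vanishes identically, so $\gamma_{\alpha_1,\ldots,\alpha_k} \equiv 0$ on $D$; in particular $g(x_\Sigma(T_u;x_0,u)) = \gamma_{\alpha_1,\ldots,\alpha_k}(t_1,\ldots,t_k) = 0$. Since $u$ was arbitrary, this proves the claim. I expect the only real obstacle to be the bookkeeping in the second step — namely verifying carefully that the natural domain $D$ is connected (so that the identity theorem applies globally rather than just on the connected component containing the small box) and that the small box $\prod_i(0,\bar s_i)$ genuinely lies in $D$ and not merely in the closure of $\widetilde{\U_{pc}}$'s image; this uses that $\widetilde{\U_{pc}} \subseteq \overline{\U_{pc}}(\Sigma)$ by hypothesis. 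The analyticity of the flow of a Nash vector field and the identity theorem are standard and can be cited.
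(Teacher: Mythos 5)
Your argument is correct and is exactly the analytic-continuation argument the paper intends: Proposition \ref{prop2} is stated with only the remark that it follows from the analyticity of Nash functions, and your reduction to an analytic function of the switching times that vanishes on an open box produced by admissibility is the same device the paper uses explicitly for Proposition \ref{rem:restrict} and for the injectivity of $\Phi_u$. One small caveat: the connectedness of the domain $D$ does not really come from the connectedness of $X$, but from the structure of iterated maximal flow intervals (each point $(s_1,\ldots,s_k)\in D$ can be joined to the origin inside $D$ by successively contracting $s_k$, then $s_{k-1}$, etc., to $0$) --- a point you correctly flag as the step requiring care.
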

Proposition \ref{prop2} and Proposition \ref{prop:7.1} yield the following
corollary.

\begin{corollary}
\label{prop2:col}
Let $\Sigma$ be a local realization of a response map $p$.
Then $\Sigma$ is semi-algebraically reachable if and only if the state-to-output map
$\tau_{\Sigma,p}^{*}$ is injective.
\end{corollary}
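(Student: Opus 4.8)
The plan is to reduce the statement to the fact, already recorded in the proof of Proposition~\ref{reach:prop1}, that $\Sigma$ is semi-algebraically reachable if and only if the ideal $I \subseteq \Na(X)$ of functions vanishing on $\reach(x_0)$ is trivial, equivalently $\Ker\,\tau_{\Sigma}^{*}=\{0\}$. Writing $S := \widetilde{\U_{pc}} \cap \widetilde{\U_{pc}}(\Sigma)$ for the input set appearing in the domain of $\tau_{\Sigma,p}^{*}$, it then suffices to prove that $\Ker\,\tau_{\Sigma,p}^{*}=I$: since $\tau_{\Sigma,p}^{*}$ is injective iff its kernel is $\{0\}$, this identity makes injectivity of $\tau_{\Sigma,p}^{*}$ equivalent to $I=\{0\}$, i.e.\ to semi-algebraic reachability.

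First I would check that $S$ is an admissible set of inputs: each of the three defining conditions holds for both $\widetilde{\U_{pc}}$ and $\widetilde{\U_{pc}}(\Sigma)$ and is preserved under intersection, the only mild point being conditions (ii) and (iii), where one takes the minimum of the two witnessing $t$, resp.\ $\delta$, and invokes prefix-closedness~(i). Unravelling the definition of $\tau_{\Sigma,p}^{*}$ gives $\Ker\,\tau_{\Sigma,p}^{*} = \{\, g \in \Na(X) \mid g = 0 \text{ on } \reach_S(x_0)\,\}$, where $\reach_S(x_0)=\{x_{\Sigma}(T_u;x_0,u) \mid u \in S\}$. Since $S \subseteq \widetilde{\U_{pc}}(\Sigma)$ we have $\reach_S(x_0) \subseteq \reach(x_0)$, whence the inclusion $I \subseteq \Ker\,\tau_{\Sigma,p}^{*}$ is immediate.

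The nontrivial inclusion is $\Ker\,\tau_{\Sigma,p}^{*} \subseteq I$, and this is exactly where Proposition~\ref{prop2} enters. Take $g \in \Ker\,\tau_{\Sigma,p}^{*}$, so $g(x_{\Sigma}(T_u;x_0,u))=0$ for all $u \in S$. Because $\Sigma$ locally realizes $p$ we have $\widetilde{\U_{pc}} \subseteq \overline{\U_{pc}}(\Sigma)$ (cf.\ the remark following Assumption~\ref{assum0}), hence $S \subseteq \overline{\U_{pc}}(\Sigma)$; applying Proposition~\ref{prop2} with the admissible set $S$ in the role of $\widetilde{\U_{pc}}$ yields $g(x_{\Sigma}(T_u;x_0,u))=0$ for every $u \in \overline{\U_{pc}}(\Sigma)$. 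Since $\widetilde{\U_{pc}}(\Sigma) \subseteq \overline{\U_{pc}}(\Sigma)$, in particular $g$ vanishes on $\reach(x_0)$, i.e.\ $g \in I$. This gives $\Ker\,\tau_{\Sigma,p}^{*}=I$ and completes the proof; the primality of this common kernel, if needed, follows as in Proposition~\ref{reach:prop1} from $\A(S \rightarrow \real)$ being an integral domain.

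I expect the main obstacle to be precisely this last inclusion: passing from ``$g$ vanishes on the states reached by the (possibly arbitrarily small) input set $S$'' to ``$g$ vanishes on all reachable states''. Everything else is bookkeeping, but this step genuinely relies on the analyticity of Nash functions together with the negative-time extension machinery behind Proposition~\ref{prop2} --- one first propagates the vanishing to states reached by generalized inputs, where reversibility of the flows $f_{\alpha}^{t}$ allows free movement, and only afterwards restricts back to ordinary reachable states. A secondary point requiring care is the two inclusions into $\overline{\U_{pc}}(\Sigma)$, which rest on the maximality built into its definition and on the remark after Assumption~\ref{assum0}.
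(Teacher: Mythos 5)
Your proof is correct and follows exactly the route the paper intends: Corollary \ref{prop2:col} is stated there as an immediate consequence of Proposition \ref{prop2} (transferring vanishing on the states reached by inputs in $\widetilde{\U_{pc}} \cap \widetilde{\U_{pc}}(\Sigma)$ to vanishing on all of $\reach(x_0)$) combined with Proposition \ref{prop:7.1}, and you have merely supplied the bookkeeping ($\Ker\,\tau_{\Sigma,p}^{*}=I$, admissibility of the intersection) that the paper leaves implicit. The only blemish is your intermediate claim $\widetilde{\U_{pc}} \subseteq \overline{\U_{pc}}(\Sigma)$, which does not follow from $\Sigma$ being merely a \emph{local} realization, but it is harmless because the inclusion you actually need, $S \subseteq \overline{\U_{pc}}(\Sigma)$, already follows from $S \subseteq \widetilde{\U_{pc}}(\Sigma) \subseteq \overline{\U_{pc}}(\Sigma)$.
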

\begin{theorem} \label{theorem:reach}
Procedure \ref{proc:reach} is correct. Namely, by following the steps of Procedure \ref{proc:reach} one transforms any local realization $\Sigma=(X,f,h,x_0)$ of a response map $p$
to a semi-algebraically reachable local realization of $p_u$, where $u \in \widetilde{\U_{pc}} \cap \widetilde{\U_{pc}}(\Sigma)$ can be chosen such that $T_u < \epsilon$ for any $\epsilon > 0$.
\end{theorem}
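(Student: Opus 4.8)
The plan is to verify that each step of Procedure~\ref{proc:reach} is well-defined and that the resulting system $\Sigma_V$ has the claimed properties. I would organize the proof around three claims: (a) the objects constructed in Steps~1--3 exist; (b) $\Sigma_V$ is a local Nash realization of $p_u$; (c) $\Sigma_V$ is semi-algebraically reachable.

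\textbf{Existence of the construction.} First I would note that since $\Na(X)/I$ is a finitely generated $\real$-algebra which is an integral domain (by Proposition~\ref{reach:prop1}, $I$ is prime), it has a finite transcendence basis $\varphi_1,\ldots,\varphi_d$, giving Step~1; here $d = \trdeg~\Na(X)/I \le \dim X = n$. For Step~2, each coordinate function $\pi_i$ is algebraic over $\real(\varphi_1,\ldots,\varphi_d)$ modulo $I$, so there is a nonzero polynomial $Q_i$ with $Q_i(\pi_i,\Phi)\in I$, i.e.\ $Q_i(\pi_i,\Phi)=0$ on $\reach(x_0)$; choosing $Q_i$ of minimal degree in $T$ guarantees that the formal derivative $\partial Q_i/\partial T$ evaluated at $(\pi_i,\Phi)$ does not lie in $I$, hence does not vanish identically on $\reach(x_0)$, so some reachable state — reachable by an input $u$ which, by admissibility (property (ii) of admissible input sets) and a time-rescaling argument as in Proposition~\ref{rem:restrict}, can be taken with $T_u<\epsilon$ — witnesses $\frac{d}{dt}Q_i(\pi_i(x_\Sigma(T_u;x_0,u)),\Phi(x_\Sigma(T_u;x_0,u)))\neq 0$. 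This nonvanishing is exactly the hypothesis of the implicit function theorem needed in Step~3 to solve $Q_i(\pi_i,\Phi)=0$ locally for $\pi_i$ as a Nash function of $\Phi$ near $x_\Sigma(T_u;x_0,u)$, yielding $V$, $W$ and the Nash map $G$ with $G\circ\Phi=\mathrm{id}$ on the relevant trajectory segment. One must also check $V$ is open, connected (shrink if necessary) and semi-algebraic so that $\Sigma_V$ is a legitimate Nash system, and that $f^V_\alpha(z)=D\Phi(G(z))f_\alpha(G(z))$ and $h^V=h\circ G$ are Nash — this is immediate since composition and differentiation preserve the Nash property.

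\textbf{Realization property.} Here I would show that $\Phi$ intertwines the dynamics: along any trajectory staying in $W$, $\frac{d}{dt}\Phi(x_\Sigma(t;\cdot)) = D\Phi(x_\Sigma)f_\alpha(x_\Sigma) = D\Phi(G(\Phi(x_\Sigma)))f_\alpha(G(\Phi(x_\Sigma))) = f^V_\alpha(\Phi(x_\Sigma))$, using $G(\Phi(x))=x$ on $W$. Hence $z_\Sigma(t;x_0^V,v) = \Phi(x_\Sigma(T_u+t;x_0,uv))$ for all admissible $v$ with the trajectory confined to $W$, and the set of such $v$ is an admissible set of inputs after shrinking (using admissibility property (iii) and openness of $W$). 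Then $h^V(z_\Sigma(T_v;x_0^V,v)) = h(G(\Phi(x_\Sigma(T_{uv};x_0,uv)))) = h(x_\Sigma(T_{uv};x_0,uv)) = p(uv) = p_u(v)$, so $\Sigma_V$ locally realizes $p_u$.

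\textbf{Semi-algebraic reachability.} Finally, since the reachable set of $\Sigma_V$ from $x_0^V$ contains $\Phi(\reach_{\Sigma}(x))$ for $x$ ranging over states reachable in $\Sigma$ from $x_\Sigma(T_u;x_0,u)$ and staying in $W$, and since $\tau^*_{\Sigma_V}$ factors through $\Phi^*$ which is essentially the inclusion $\Na(V)\hookrightarrow$ the fraction field of $\Na(X)/I$, the image $\tau^*_{\Sigma_V}(\Na(V))$ has transcendence degree $d = \dim V$; by Proposition~\ref{reach:prop1} (the ``if'' direction, via $\dim V = \trdeg~\Na(V)/I_V$) this gives semi-algebraic reachability of $\Sigma_V$. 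The main obstacle I anticipate is the bookkeeping around Steps~2--3: simultaneously producing a single input $u$ (with $T_u<\epsilon$) that makes all $n$ derivative conditions nonzero, and then shrinking $V$, $W$ so that $G$ is globally defined on $V$, the trajectories used in the realization claim stay in $W$, and the set of admissible continuations $v$ remains an admissible input set — all while keeping everything semi-algebraic and connected. The genericity/analyticity arguments (a finite intersection of complements of proper analytic subvarieties of $\reach(x_0)$ is nonempty) handle the first point, and the admissibility axioms plus openness handle the second, but the details are delicate.
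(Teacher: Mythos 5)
Your decomposition (existence of the construction, realization property, reachability) and your treatment of the first two parts essentially match the paper's proof: the same minimal-degree argument gives $\frac{d}{dT}Q_i(\pi_i,\Phi)\notin I$, the same analyticity-in-switching-times argument produces a single $u$ with $T_u<\epsilon$ witnessing all $n$ nonvanishing conditions, and the same intertwining computation $\frac{d}{dt}\Phi(x_\Sigma)=f^V_\alpha(\Phi(x_\Sigma))$ followed by Proposition \ref{rem:restrict} upgrades agreement on the admissible set $S$ to the full realization statement. One small imprecision: $G\circ\Phi=\mathrm{id}$ holds only at \emph{reachable} points of $W$ (since $Q_i(\pi_i,\Phi)$ vanishes only on $\reach(x_0)$, not on all of $X$), but as you apply it only along trajectories this is harmless.

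The genuine gap is in the reachability step. The reachable set of $\Sigma_V$ is $\{\Phi(x_\Sigma(T_{uv};x_0,uv))\mid uv \in \widetilde{\U_{pc}}(\Sigma)\}$, i.e.\ the $\Phi$-image of only those states of $\Sigma$ reachable by inputs having $u$ as a prefix. This is a \emph{subset} of $\reach(x_0)$, so its vanishing ideal is a priori \emph{larger} than $I$, and the transcendence degree can only drop when passing to it. Your claim that $\tau^{*}_{\Sigma_V}$ ``factors through the inclusion of $\Na(V)$ into the fraction field of $\Na(X)/I$'' silently assumes that a polynomial relation $Q(\varphi_1,\ldots,\varphi_d)$ vanishing on the $u$-shifted reachable set already lies in $I$ --- but $\varphi_1,\ldots,\varphi_d$ are algebraically independent only modulo $I$, which was defined by vanishing on all of $\reach(x_0)$. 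This is precisely where the paper invokes the extension to negative times from Section \ref{sec:ext}: for any $v\in\overline{\U_{pc}}(\Sigma)$ one has $u\stackrel{\leftarrow}{u}v\in\overline{\U_{pc}}(\Sigma)$ with terminal state $x_\Sigma(T_v;x_0,v)$, so vanishing of $Q(\Phi(x_\Sigma(T_{uw};x_0,uw)))$ for all $w$ forces $Q(\varphi_1,\ldots,\varphi_d)=0$ on $\overline{\reach}(x_0)\supseteq\reach(x_0)$, hence $Q(\varphi_1,\ldots,\varphi_d)\in I$ and $Q=0$ by algebraic independence. Without this rewind (or an equivalent analytic-continuation argument in the switching times in the spirit of Proposition \ref{prop2}, letting the durations of the $u$-prefix shrink to zero), your conclusion that $\trdeg\,\Na(V)/I_V=d$ does not follow. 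Once that bridge is in place, your final appeal to Proposition \ref{reach:prop1} is exactly the paper's.
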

\begin{proof}
Let $I, \varphi_1,\ldots,\varphi_d$ be as in step 1). First we
prove the existence of polynomials $Q_i$ in step 2).
For any
$g \in \Na(X)$, there exists a non-zero polynomial $Q$ such that
$Q(g,\Phi)$ equals zero on $\reach(x_0)$.
In particular, for the coordinate functions $\pi_1,\ldots,\pi_n$ on $X$
there exist non-zero polynomials $Q_i$, $i=1,\ldots,n$,
such that $Q_i(\pi_i(x_{\Sigma}(T_u;x_0,u)),\Phi(x_{\Sigma}(T_u;x_0,u))) =0$
for all $u \in \widetilde{\U_{pc}}(\Sigma)$.
Choose the polynomials $Q_i(T,T_1,\ldots,T_d)$ in such a way that their degrees are the minimal ones.
Hence, $\frac{dQ_i}{dT}(\pi_i,\Phi)$ is not identically zero on
$\reach(x_0)$.
For any $\epsilon > 0$, we can find $u \in \widetilde{\U_{pc}}$ such
that $T_u < \epsilon$ and
$\frac{dQ_i}{dT}(\pi_i(x_{\Sigma}(T_u;x_0,u)),\Phi(x_{\Sigma}(T_u;x_0,u))) \ne 0$.
Let us fix such $u$ for the given $\epsilon$.

In step 3) we apply the implicit function theorem \cite[Corollary 2.9.8]{bochnak:coste:roy:1998}
to each $Q_i(\pi_i,\Phi)$ on an open neighborhood of $x_{\Sigma}(T_u;x_0,u)$.
We obtain open connected semi-algebraic sets $V_i \subseteq \mathbb{R}^{d}$, $W_i \subseteq \mathbb{R}$ and Nash maps
$G_i: V_i \rightarrow W_i$ by which we define non-empty semi-algebraic sets $V:=\bigcap_{i=1}^n V_i$,
$W:=W_1 \times \cdots \times W_n$ and the Nash map $G=(G_1,\ldots,G_n)$.
Notice that $G$ is the unique Nash map such that for any
$(p,q) \in V \times W$ it holds that $Q_i(q_i,p)=0$, $i=1,\ldots,d$ is equivalent to $q=G(p)$.
In particular, for any $z \in W$ such that $z$ is reachable,
$G(\Phi(z))=z$.

Let us prove that $\Sigma_V$ defined in step 4) is a local realization of $p_u$.
Let $S$ be the largest set of admissible inputs such that
$\forall v \in S \forall t \in [0,T_v] : x_{\Sigma}(T_u+t;x_0,uv) \in W$.
It is easy to see that for any $v \in S$,
$t \mapsto z(t)= \Phi(x_{\Sigma}(T_u+t;x_0,uv))$ satisfies
$z(t) \in V$ and $\dot z(t)=f^{V}_{v(t)}(z(t))$, i.e.
$z(t)=x_{\Sigma_V}(t;x_0^V,v)$.
Hence, $S \subseteq \widetilde{\U_{pc}} \cap \widetilde{\U_{pc}}(\Sigma_V)$.
Finally, from the definition of $h^V$, it follows that
$h^V(z(t)) = h(G(z(t))) = h(x_{\Sigma}(T_u+t;x_0,uv))$.
Hence, for any $v \in S$,
$p(uv)=h^{V}(x_{\Sigma_V}(T_v;x_0^V,v))$.
Define the map $\hat{p}:\widetilde{\U_{pc}}^{u} \cap \widetilde{\U_{pc}}(\Sigma_V) \rightarrow \real^r$ as $\hat{p}(v)=h^{V}(x_{\Sigma_V}(T_v;x_0^V,v))$.
Then $(\hat{p}-p_u)|_{S}=0$. Since $S$ is an admissible set of inputs,
Proposition \ref{rem:restrict} implies that $\hat{p}=p$, i.e.
$\Sigma_V$ is a local realization of $p_u$.

It is left to show that $\Sigma_V$ is semi-algebraically reachable. Assume now that there exists a non-zero polynomial $Q \in \real[T_1,\ldots,T_d]$ such that
$Q(x_{\Sigma_V}(T_v;x_0^V,v))=0$ for all $v \in \overline{\U_{pc}}(\Sigma_V)$.
Because $x_{\Sigma_V}(T_v;x_0^V,v) = \Phi(x_{\Sigma}(T_{uv};x_0,uv))$ for
all $v \in \overline{\U_{pc}}(\Sigma_V)$, it follows that
$Q(\Phi(x_{\Sigma}(T_{uv};x_0,uv)))=0$ for all $v$ such that
$uv \in \overline{\U_{pc}}(\Sigma)$. Notice that for any
$v \in \overline{\U_{pc}}(\Sigma)$, $u\stackrel{\leftarrow}{u}v \in \overline{\U_{pc}}(\Sigma)$, from which it follows that
$Q(\Phi(x_{\Sigma}(T_{v};x_0,v)) = Q(\varphi_1(x_{\Sigma}(T_{v};x_0,v)), \ldots , \varphi_d(x_{\Sigma}(T_{v};x_0,v))) = 0$ for all $v \in \overline{\U_{pc}}(\Sigma)$.
The latter implies that $Q(\varphi_1, \ldots , \varphi_d)=0$ on $\overline{\reach}(x_0) = \{x_{\Sigma}(T_u;x_0,u)|u\in \overline{\U_{pc}}(\Sigma)\}$ and thus on $\reach(x_0)$.
Therefore, $Q(\varphi_1, \ldots , \varphi_d) \in I$. But this means that $\varphi_1,\ldots,\varphi_d$ are algebraically dependent, which is
a contradiction.
Hence, if $I_V$ is the ideal of functions from $\Na(V)$ which
vanish on  $\reach(x_0^V)$, then $\trdeg ~\Na(V)= \trdeg ~\Na(V)/I_V$. Then, from Proposition \ref{reach:prop1},
$\Sigma_V$ is semi-algebraically reachable.
\end{proof}

\section{Observability reduction} \label{sec:obs}

\begin{procedure} \label{proc:obs}
\emph{
Let $\Sigma=(X,f,h,x_0)$ be a semi-algebraically reachable local realization of a response map $p: \widetilde{\U_{pc}} \rightarrow \real^r$ and let $\epsilon >0$ be arbitrary.
\begin{enumerate}
\item Choose a transcendence basis $\varphi_1,\ldots,\varphi_d$ of $A_{obs}(\Sigma)$. Let us denote $\Phi = (\varphi_1,\ldots,\varphi_d)$.
\item Find non-zero polynomials $Q_{\alpha,i}, P_j \in \real[T,T_1,\ldots,T_d]$ with the smallest possible degree such that $Q_{\alpha,i}(f_{\alpha}\varphi_i(x),\Phi(x))=0$ and $P_j(h_j(x),\Phi(x))=0$ for all $x \in X$, $i=1,\ldots,d$, $\alpha \in U$, $j=1,\dots,r$.
\item Find $u \in \widetilde{\U_{pc}}(\Sigma) \cap \widetilde{\U_{pc}}$ such that $T_u < \epsilon$ and such that $\frac{d}{dT} Q_{\alpha,i}(f_{\alpha}\varphi_i,\Phi)$ and $\frac{d}{dT} P_{j}(h_j,\Phi)$ do not vanish on an open neighborhood of $\overline{x} := x_{\Sigma}(T_u;x_0,u)$.
\item By implicit function theorem, find
\\
(1) open connected semi-algebraic subsets $W_{\alpha,i} \subseteq \mathbb{R}$, $V_{\alpha,i} \subseteq \mathbb{R}^d$ and a Nash map $\psi_{\alpha,i}:V_{\alpha,i} \rightarrow W_{\alpha,i}$ such that $f_{\alpha}\varphi_i(\overline{x}) \in W_{\alpha,i}$, $\Phi(\overline{x}) \in V_{\alpha,i}$ and $\psi_{\alpha,i}(\Phi(\overline{x}))= f_{\alpha}\varphi_i(\overline{x})$, and for all $(f_{\alpha}\varphi_i(x),\Phi(x)) \in W_{\alpha,i} \times V_{\alpha,i}$,
$Q_{\alpha,i}(f_{\alpha}\varphi_i(x),\Phi(x))=0 \iff f_{\alpha}\varphi_i(x) = \psi_{\alpha,i}(\Phi(x))$,
\\
(2) open connected semi-algebraic subsets $W_{j} \subseteq \mathbb{R}$, $V_{j} \subseteq \mathbb{R}^d$ and a Nash map $\psi_{j}:V_{j} \rightarrow W_{j}$ such that $h_j(\overline{x}) \in W_{j}$, $\Phi(\overline{x}) \in V_{j}$ and $\psi_{j}(\Phi(\overline{x}))= h_j(\overline{x})$, and for all $(h_j(x),\Phi(x)) \in W_{j} \times V_{j}$,
$P_{j}(h_j(x),\Phi(x))=0 \iff h_j(x) = \psi_{j}(\Phi(x))$.
\item Define $V:=\bigcap_{\substack{i=1,\ldots, d \\ \alpha \in U \\j=1,\ldots, r}}V_{\alpha,i}\cap V_j$.
\item Then the system $\Sigma_V = (V,f^V,h^V,x_0^V)$, where $x_0^V = \Phi(\overline{x})$, $f^V=\{f^{V}_{\alpha} \mid \alpha \in U\}$ is such that $f^{V}_{\alpha}(x) = (\psi_{\alpha,1}(x) , \ldots ,\psi_{\alpha,d}(x))$ for all $\alpha \in U$, $x\in V$, and $h^{V}(\Phi(x)) = h(x)$, is a local Nash realization of $p_u$ which is both semi-algebraically observable and semi-algebraically reachable.
\end{enumerate}
}
\end{procedure}

\begin{theorem} \label{thm:obs1}
Procedure \ref{proc:obs} is correct. Namely, by following the steps of Procedure \ref{proc:obs} one transforms any semi-algebraically reachable local realization $\Sigma=(X,f,h,x_0)$ of a response map $p$ with finite set of input values to a semi-algebraically reachable and semi-algebraically observable local realization of $p_u$, where $u \in \widetilde{\U_{pc}} \cap \widetilde{\U_{pc}}(\Sigma)$ can be chosen such that $T_u < \epsilon$ for any $\epsilon > 0$.
\end{theorem}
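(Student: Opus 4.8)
The plan is to follow essentially the same strategy as in the proof of Theorem~\ref{theorem:reach}, but now carrying the argument on the observation side rather than the reachability side. I would structure the proof in three blocks: (1) justify that the polynomials $Q_{\alpha,i}$ and $P_j$ and the input $u$ demanded in steps 2)--3) actually exist; (2) verify that $\Sigma_V$ is a well-defined Nash system and a local realization of $p_u$; (3) verify that $\Sigma_V$ is simultaneously semi-algebraically observable and semi-algebraically reachable.

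First I would establish the existence claims. Since $\varphi_1,\ldots,\varphi_d$ is a transcendence basis of $A_{obs}(\Sigma) \subseteq \Na(X)$ and each $f_\alpha \varphi_i$ and $h_j$ lies in $A_{obs}(\Sigma)$ (the observation algebra is closed under the Lie derivatives $f_\alpha$ and contains the components of $h$), each of these elements is algebraic over $\real(\varphi_1,\ldots,\varphi_d)$; clearing denominators yields non-zero polynomials $Q_{\alpha,i}, P_j \in \real[T,T_1,\ldots,T_d]$ vanishing identically on $X$ after the substitution. Choosing them of minimal degree in $T$ guarantees that $\frac{\partial}{\partial T}Q_{\alpha,i}(f_\alpha\varphi_i,\Phi)$ and $\frac{\partial}{\partial T}P_j(h_j,\Phi)$ are not identically zero on $X$ (otherwise one obtains a lower-degree relation, using that $\Na(X)$ is an integral domain since $X$ is connected). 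The set where all these finitely many Nash functions are simultaneously non-zero is therefore a non-empty open semi-algebraic subset of $X$; since $\Sigma$ is semi-algebraically reachable, $\reach(x_0)$ (equivalently $\overline{\reach}(x_0)$, by Proposition~\ref{prop2}) is not contained in any proper Nash subvariety, so it meets this open set, and by the admissibility axioms one can realize such a state by an input $u$ with $T_u<\epsilon$. This is where the finiteness of $U$ is used: it keeps the family $\{Q_{\alpha,i}\}_{\alpha,i}$ finite, so the good open set is genuinely open and non-empty.

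Next, for the realization claim: step 4) applies the semi-algebraic implicit function theorem (\cite[Corollary~2.9.8]{bochnak:coste:roy:1998}) at $\overline{x}=x_\Sigma(T_u;x_0,u)$ to solve each relation $Q_{\alpha,i}=0$ for $f_\alpha\varphi_i$ as a Nash function $\psi_{\alpha,i}$ of $\Phi$, and similarly $P_j=0$ for $h_j$ as $\psi_j(\Phi)$; intersecting the (finitely many) domains gives the open connected semi-algebraic set $V$ and a well-defined Nash system $\Sigma_V$. The key observation, as in Theorem~\ref{theorem:reach}, is that along any trajectory $x_\Sigma(T_u+t;x_0,uv)$ that stays inside the relevant neighbourhoods, the curve $z(t):=\Phi(x_\Sigma(T_u+t;x_0,uv))$ satisfies $\dot z_i(t) = (f_{v(t)}\varphi_i)(x_\Sigma(T_u+t;x_0,uv)) = \psi_{v(t),i}(z(t))$, hence $z(t)=x_{\Sigma_V}(t;x_0^V,v)$, and $h^V(z(t)) = h(x_\Sigma(T_u+t;x_0,uv))$. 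Thus the output of $\Sigma_V$ under $v$ agrees with $p_u(v)=p(uv)$ on the admissible set $S$ of inputs $v$ whose trajectories remain in the neighbourhoods; since $S$ is admissible, Proposition~\ref{rem:restrict} upgrades this to an identity of response maps on all of $\widetilde{\U_{pc}}^u \cap \widetilde{\U_{pc}}(\Sigma_V)$, so $\Sigma_V$ locally realizes $p_u$.

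Finally, for the two structural properties. Semi-algebraic reachability of $\Sigma_V$ follows exactly as in the last paragraph of the proof of Theorem~\ref{theorem:reach}: a polynomial relation $Q(\Phi)=0$ on $\reach(x_0^V)$ pulls back (via $x_{\Sigma_V}(T_v;x_0^V,v)=\Phi(x_\Sigma(T_{uv};x_0,uv))$ and the fact that $v\mapsto u\stackrel{\leftarrow}{u}v$ lets one reach back to all of $\overline{\reach}(x_0)$) to $Q(\varphi_1,\ldots,\varphi_d)=0$ on $\reach(x_0)$, contradicting algebraic independence of the $\varphi_i$; hence $\dim V = \trdeg\,\Na(V)/I_V$ and Proposition~\ref{reach:prop1} applies. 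For semi-algebraic observability, I would show $A_{obs}(\Sigma_V)$ contains $\real[T_1,\ldots,T_d]$ restricted to $V$: the coordinate functions $\pi_i^V$ on $V$ are exactly the images of $\varphi_i$, and one checks by induction on the structure of $A_{obs}(\Sigma)$ that the state-to-output correspondence intertwines $A_{obs}(\Sigma_V)$ with (the restriction to the relevant domain of) $A_{obs}(\Sigma)$ pushed through $\Phi$ --- concretely, $\pi_i^V \in A_{obs}(\Sigma_V)$ because $h_j = \psi_j(\Phi)$ and the $\varphi_i$ themselves are algebraic over the algebra generated by the $h_j$ and their derivatives, and each Lie derivative $f^V_\alpha \pi_i^V = \psi_{\alpha,i}$ is already a function of the $\pi^V$. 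Either way one concludes $\trdeg\,A_{obs}(\Sigma_V) \ge d = \dim V \ge \trdeg\,A_{obs}(\Sigma_V)$, giving equality, i.e. semi-algebraic observability.

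I expect the main obstacle to be the observability half of the last step: unlike reachability, where the defining condition is a clean statement about a prime ideal and one can quote Proposition~\ref{reach:prop1}, proving $\trdeg\,A_{obs}(\Sigma_V) = d$ requires carefully tracking that the transcendence basis $\{\varphi_i\}$ of $A_{obs}(\Sigma)$ really does become a transcendence basis of $A_{obs}(\Sigma_V)$ after the change of coordinates --- in particular that no transcendence is lost when $h$ and the $f_\alpha\varphi_i$ are re-expressed through the $\psi$'s, and that the algebra they generate under the new Lie derivatives is not strictly smaller than $\real[\pi_1^V,\ldots,\pi_d^V]$ up to algebraic dependence. Making this intertwining precise (ideally by exhibiting an explicit algebra isomorphism between $A_{obs}(\Sigma)$, suitably localized, and $A_{obs}(\Sigma_V)$, analogous to the maps $\Psi$, $\Phi_u$ of Section~\ref{sec:ext}) is the technical heart of the argument.
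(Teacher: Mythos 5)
Your overall architecture is the same as the paper's: existence of the $Q_{\alpha,i},P_j$ from algebraicity over the transcendence basis, minimal degree in $T$ to get non-vanishing derivatives, the observation that the product of all these derivatives is a non-zero Nash function and hence (by semi-algebraic reachability of $\Sigma$ and the integral-domain property of $\Na(X)$) cannot vanish on $\reach(x_0)$, the implicit function theorem, the trajectory computation $z(t)=\Phi(x_\Sigma(T_u+t;x_0,uv))$, and Proposition \ref{rem:restrict} to upgrade agreement on an admissible set $S$ to a realization of $p_u$. The one place where your sketch does not close is the observability step, which you yourself flag as the technical heart. You assert $\pi_i^V\in A_{obs}(\Sigma_V)$ "because the $\varphi_i$ themselves are algebraic over the algebra generated by the $h_j$ and their derivatives" --- but being algebraic over an algebra does not put an element into it, so as stated this gives nothing. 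The paper's resolution is more concrete than the intertwining isomorphism you anticipate: since $A_{obs}(\Sigma)$ is generated by the iterated Lie derivatives $f_{\alpha}h_j$, the transcendence basis can be chosen so that each $\varphi_i$ \emph{is} some $f_{\alpha}h_j$. A chain-rule computation on the open set $S$ where the implicit-function data is valid then shows $\varphi_i=(f^V_{\alpha}h^V_j)\circ\Phi$ on $S$, so the element $g_i:=f^V_{\alpha}h^V_j\in A_{obs}(\Sigma_V)$ coincides with $\pi_i$ on $\Phi(S)$. Any polynomial relation among the $g_i$ on $V$ restricts to a relation among the $\varphi_i$ on the nonempty open set $S$, contradicting their algebraic independence; hence $\trdeg~A_{obs}(\Sigma_V)\ge d=\trdeg~\Na(V)$ and observability follows. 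No membership claim $\pi_i^V\in A_{obs}(\Sigma_V)$ is needed, only an element of $A_{obs}(\Sigma_V)$ agreeing with $\pi_i^V$ on an open set.

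For reachability of $\Sigma_V$ you take a genuinely different route from the paper. You mirror the last paragraph of Theorem \ref{theorem:reach}: pull a relation $Q=0$ on $\reach(x_0^V)$ back through $\Phi$ and the shift $v\mapsto u\stackrel{\leftarrow}{u}v$ to get $Q(\varphi_1,\ldots,\varphi_d)=0$ on $\reach(x_0)$. This works, but note that here the $\varphi_i$ are only algebraically independent in $\Na(X)$ (not modulo the vanishing ideal of $\reach(x_0)$, as in Theorem \ref{theorem:reach}), so you need one extra invocation of semi-algebraic reachability of $\Sigma$ to pass from "$Q(\varphi)=0$ on $\reach(x_0)$" to "$Q(\varphi)=0$ on $X$" before the contradiction. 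The paper instead argues via transcendence degrees: using Corollary \ref{col1}, Proposition \ref{rem:restrict} and injectivity of $\tau^{*}_{\Sigma,p}$ it shows $\trdeg~A_{obs}(\Sigma_V)=\trdeg~A_{obs}(\hat p_u)$, and then \cite[Ch.~II, Thm.~28, 29]{zariski:samuel:1958} gives injectivity of $\tau^{*}_{\Sigma_V,p_u}$, i.e.\ reachability by Corollary \ref{prop2:col}. Your version is more elementary and more parallel to the reachability-reduction proof; the paper's reuses the just-established equality $\dim\Sigma_V=\trdeg~A_{obs}(\Sigma_V)$. Either is acceptable once the observability step above is repaired.
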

\begin{proof}
Let $\varphi_1,\ldots,\varphi_d$ be a transcendence basis of $A_{obs}(\Sigma)$ constructed in step 1).
Then $f_{\alpha}\varphi_i, h_j$, where $i=1,\dots,d$, $\alpha \in U$, $j=1,\dots,r$, are algebraic over $\{\varphi_1,\ldots,\varphi_d\}$. Therefore, there exist non-zero polynomials $Q_{\alpha,1},\ldots,Q_{\alpha,d}, P_1, \dots ,P_r \in \real[T,T_1,\ldots,T_d]$, to be found in step 2), such that $Q_{\alpha,i}(f_{\alpha}\varphi_i(x),\Phi(x))=0$ and $P_j(h_j(x),\Phi(x))=0$ for all $x \in X$, $i=1,\ldots,d$, $\alpha \in U$, $j=1,\dots,r$. Note that the maps $X \ni x \mapsto Q_{\alpha,i}(f_{\alpha}\varphi_i(x),\Phi(x))$ and $X \ni x \mapsto P_j(h_j(x),\Phi(x))$ belong to $\Na(X)$.
\par
Let us prove the existence of an input $u$ required in step 3).
By choosing $Q_{\alpha,1},\ldots,Q_{\alpha,d}$ and $ P_1, \dots ,P_r$ to be of minimal degree with respect to the indeterminant $T$, we can assume that $\frac{d}{dT} Q_{\alpha,i}(f_{\alpha}\varphi_i,\Phi) \ne 0$, $i=1,\ldots, d$, $\alpha \in U$ and $\frac{d}{dT} P_{j}(h_j,\Phi) \ne 0$, $j=1,\ldots, r$.
Then, there exist $\overline{x} \in X$ and an open neighborhood $O$ of $\overline{x}$
such that $\frac{d}{dT} Q_{\alpha,i}(f_{\alpha}\varphi_i,\Phi)$ and $\frac{d}{dT} P_{j}(h_j,\Phi)$ do not vanish on $O$.
Let us assume for contradiction that such $O$ does not exist. Then $X$ can be expressed as finite union of the zero sets of $\frac{d}{dT} Q_{\alpha,i}(f_{\alpha}\varphi_i,\Phi)$ and $\frac{d}{dT} P_{j}(h_j,\Phi)$ which are nowhere dense sets. Therefore $X$ is finite union of sets of the first category. However, $X$ is a non-empty open semi-algebraic subset of $\real^n$ which means it is a topological manifold and thus a locally compact Hausdorff space. Therefore, by Baire's theorem, see \cite[Theorem 2.2]{rudin:1973}, $X$ is of the second category which means that $X$ cannot be expressed as countable union of sets of the first category.
Let us choose $\overline{x}$ from above as an element of $\reach(x_0)$. Since $O \cap \reach(x_0) \neq \emptyset$, such $\overline{x}$ does exist. Let us assume that $O \cap \reach(x_0) = \emptyset$ for a contradiction.
Then, the Nash function
\[ R:=\prod_{\substack{i=1,\ldots, d \\ \alpha \in U \\j=1,\ldots, r}} \frac{d}{dT} Q_{\alpha,i}(f_{\alpha}\varphi_i,\Phi) \frac{d}{dT} P_{j}(h_j,\Phi)\]
is zero on $\reach(x_0)$. Because $R \in \Na(X)$ and because $\Sigma$ is semi-algebraically reachable, it implies that $R = 0$ on X. Therefore, since $\Na(X)$ is an integral domain, at least one of the functions in the product would have to be zero on $X$. This contradicts the fact that $\frac{d}{dT} Q_{\alpha,i}(f_{\alpha}\varphi_i,\Phi) \ne 0$ and $\frac{d}{dT} P_{j}(h_j,\Phi) \ne 0$.
Then $u$ required in step 3) is the input $u \in \widetilde{\U_{pc}}(\Sigma)$ such that
$\overline{x} = x_{\Sigma}(T_u;x_0,u)$.
\par
In step 4) we apply the implicit function theorem \cite[Corollary 2.9.8]{bochnak:coste:roy:1998} to each of the functions $Q_{\alpha,i}(f_{\alpha}\varphi_i,\Phi)$ and $P_{j}(h_j,\Phi)$ on the open neighborhood $O$ of $\overline{x}$. We obtain the  sets $W_{\alpha,i}, V_{\alpha,i},W_{j}, V_{j}$ and the Nash maps $\psi_{\alpha,i}, \psi_{j}$ as specified in (1), (2) of step 4).
\par
Below we prove that the system $\Sigma_V$ specified in step 6) has the desired properties.
To this end, define
$S := \{x \in X \mid \Phi(x) \in V , f_{\alpha}\varphi_i(x) \in W_{\alpha,i}, h_j \in W_j\}$.
\par
For each $i=1,\ldots,d$, $\pi_i(\Phi(x))=\varphi_i(x)$. Because $\varphi_i \in A_{obs}(\Sigma)$, there exist $k \in \mathbb{N}$, $\alpha = (\alpha_1,\dots ,\alpha_k) \in U^k$ and $j \in \{1,\ldots ,r\}$ such that $\varphi_i(x) = f_{\alpha}h_j(x)$. Notice that by $f_{\alpha_1}h_j$ we denote the Lie derivative of $h_j$ along the vector field $f_{\alpha_1}$, i.e. $L_{f_{\alpha_1}}h_j$, and by $f_{\alpha}h_j$ we denote $L_{f_{\alpha_k}} \cdots L_{f_{\alpha_1}}h_j$. Therefore, for $x\in S$, $\pi_i(\Phi(x))=\varphi_i(x) = f_{\alpha}h_j(x) =
$
$ \sum_{i=1}^n f_{\alpha,i}(x)\frac{d}{dx_i}h_j(x) = \sum_{i=1}^n f_{\alpha,i}(x) \frac{d}{dx_i}(h_j^V(\Phi(x))) $
\noindent
$= \sum_{i=1}^n f_{\alpha,i}(x) \sum_{k=1}^d \frac{d}{dz_k} h_j^V(z)|_{z=\Phi(x)} \frac{d}{dx_i}\Phi_k(x) $
\noindent
$= \sum_{k=1}^d \frac{d}{dz_k} h_j^V(z)|_{z=\Phi(x)}  \sum_{i=1}^n f_{\alpha,i}(x) \frac{d}{dx_i}\Phi_k(x) $
\noindent
$= \sum_{k=1}^d \frac{d}{dz_k} h_j^V(z)|_{z=\Phi(x)}  f_{\alpha}^V(\Phi_k(x)) =
 f_{\alpha}^V h_j^V(\Phi(x))$.
So, $\pi_i$ coincides with $f_{\alpha}^V h_j^V \in A_{obs}(\Sigma_V)$ on $\Phi(S)$.
\par
Let us define $g_i:=f_{\alpha}^V h_j^V$, $i = 1, \ldots,d$.
Suppose there exists a polynomial $Q \in \real[T_1,\ldots,T_d]$ such that $Q(g_1,\ldots ,g_d) = 0$ on $V$. Then, $Q(\pi_1,\ldots ,\pi_d) = 0$ on $\Phi(S)$ and thus $Q(\pi_1(\Phi(x)),\ldots ,\pi_d(\Phi(x))) = Q(\varphi_1(x), \ldots ,\varphi_d(x))= 0$ for all $x \in S$. Because $S$ is a non-empty open set and because $\varphi_1, \ldots ,\varphi_d$ are algebraically independent, it follows that $Q \equiv 0$. This implies that $g_1, \ldots , g_d \in A_{obs}(\Sigma_V)$ are algebraically independent. Hence, transcendence basis of $A_{obs}(\Sigma_V)$ contains at least $d$ elements, i.e. $\trdeg~A_{obs}(\Sigma_V) \geq d = \trdeg~\Na(V)$. This implies that $\trdeg~A_{obs}(\Sigma_V) = \trdeg~\Na(V)$ and thus $\Sigma_V$ is semi-algebraically observable.
\par
Next we prove that $\Sigma_V$ is a local realization of $p_u$.
Let $S_{\Sigma,p}$ be the largest admissible set of inputs such that
$S_{\Sigma,p} \subseteq \widetilde{\U_{pc}} \cap \widetilde{\U_{pc}}(\Sigma)$ and
such that $\forall v \in S_{\Sigma,p} \forall t \in [0,T_v] : x_{\Sigma}(T_u+t;x_0,uv) \in S$.
Since $S$ is open, $S_{\Sigma,p}$ is non-empty.
It is easy to see that for any $v \in S_{\Sigma,p}$,
$x_{\Sigma_V}(T_v;x_0^V,v)$ is well-defined and
$\Phi(x_{\Sigma}(T_{uv};x_0,uv)) = x_{\Sigma_V}(T_v;\Phi(\overline{x}),v)$.
Indeed, $\frac{d}{dt}\Phi(x_{\Sigma}(T_u+t;x_0,uv))$
\\
$= \sum_{k=1}^n \frac{d}{d x_k} \Phi(x_{\Sigma}(T_u+t;x_0,uv)) \frac{d}{d t} (x_{\Sigma}(T_u+t;x_0,uv))_k$
\\
$= \sum_{k=1}^n \frac{d}{d x_k} \Phi(x_{\Sigma}(T_u+t;x_0,uv)) f_{\alpha,k}(x_{\Sigma}(T_u+t;x_0,uv))$
\\
$= \sum_{k=1}^n \frac{d}{d x_k} \Phi(x_{\Sigma}(T_u+t;x_0,uv))$
\\
\begin{flushright}
\vspace{-0.5cm}$f_{\alpha,k}^V(\Phi(x_{\Sigma}(T_u+t;x_0,uv)))$
\end{flushright}
$= f_{\alpha}^V(\Phi(x_{\Sigma}(T_u+t;x_0,uv)))$
\\
and
\\
$x_{\Sigma_V}(0,\Phi(\overline{x}),v) = \Phi(x_{\Sigma}(T_u+0;x_0,uv)) = \Phi(\overline{x}) = x_0^V$.
Hence, $h^V(\Phi(x_{\Sigma}(T_{uv};x_0,uv))=h(x_{\Sigma}(T_{uv};x_0,uv))=p(uv)$
for any $v \in S_{\Sigma,p}$.
Define $\hat{p}:\widetilde{\U_{pc}}^u \cap \widetilde{\U_{pc}}(\Sigma_V) \rightarrow \real^r$
by $\hat{p}(v)=h^V(\Phi(x_{\Sigma}(T_{uv};x_0,uv))$. Then
$(\hat{p}-p)|_{S_{\Sigma,p}}=0$ and then, by Proposition \ref{rem:restrict},
it follows that $\Sigma_V$ is a local realization of $p_u$.
\par
Finally, let us prove that $\Sigma_V$ is semi-algebraically reachable.
To this end, recall the definition of the map $\tau_{\Sigma_V,p_u}^{*}$.
By Corollary \ref{prop2:col}, $\Sigma_V$ is semi-algebraically reachable if and only
if $\tau_{\Sigma_V,p_u}^{*}$ is injective.
Notice that $\tau_{\Sigma_V,p_u}^{*}$ is an algebra morphism.
Let $\hat{p}_u$ be the restriction of $p_u$ to $\widetilde{\U_{pc}}^{u} \cap \widetilde{\U_{pc}}(\Sigma_V)$.
From Proposition \ref{rem:restrict} and Corollary \ref{col1} it follows that
$\trdeg~ A_{obs}(\hat{p}_u)=\trdeg~ A_{obs}(p)=\trdeg~ A_{obs}(\hat{p})$,
where $\hat{p}$ is the restriction of $p$ to
$\widetilde{\U_{pc}} \cap \widetilde{\U_{pc}}(\Sigma)$.
It holds that
$\tau_{\Sigma,p}^{*}(A_{obs}(\Sigma))=A_{obs}(\hat{p})$. Since
$\Sigma$ is semi-algebraically reachable and thus $\tau_{\Sigma,p}^{*}$
is injective, it follows that
$\trdeg ~A_{obs}(\Sigma) = \trdeg ~A_{obs}(\hat{p})$.
From the construction of $\Sigma_V$ one gets that
$\dim \Sigma_V = \trdeg ~\Na(V) = \trdeg ~A_{obs}(\Sigma_V)$. Hence,
$\trdeg ~A_{obs}(\Sigma_V) = \trdeg ~A_{obs}(\hat{p}_u)$.
Because $\tau_{\Sigma_V,p_u}^{*}(A_{obs}(\Sigma_V))=A_{obs}(\hat{p}_u)$, from
\cite[Chapter II, Theorem 28,29]{zariski:samuel:1958} it follows that
$\tau_{\Sigma_V,p_u}^{*}$ is injective.

\end{proof}

\section{Minimal realizations} \label{sec:min}

We say that a local Nash realization $\Sigma$ of a response map $p$ is \emph{minimal} if
$\Sigma$ is of the smallest dimension among all local realizations of $p$.
By $\dim \Sigma$ we refer to the dimension of the state-space of $\Sigma$.

\begin{theorem}
Let $p: \widetilde{\U_{pc}} \rightarrow \real^r$ be a response map.
\\
\noindent
\emph{(i)} $\trdeg~A_{obs}(p) < +\infty \iff \exists u \in \widetilde{\U_{pc}}$ such that $p_u$ has a local realization.
\\
\noindent
\emph{(ii)} If $\trdeg~A_{obs}(p) = d <+\infty$ then for any $\epsilon > 0$ there exists an input
$u \in \widetilde{\U_{pc}}$ such that $T_u < \epsilon$ and a local Nash realization $\Sigma$ of $p_u$
such that $\dim \Sigma=d$ and $\Sigma$ is semi-algebraically reachable and semi-algebraically observable.
\\
\noindent
\emph{(iii)} Let $\Sigma$ be a local realization of $p_u$. Then, $\Sigma$ is minimal
      $\iff$ $\dim \Sigma = \trdeg~A_{obs}(p)$ $\iff$ $\Sigma$ is semi-algebraically
      reachable and semi-algebraically observable.
\\
\noindent
\emph{(iv)}
Let $\Sigma_1=(X_1,f^1,h^1,x_0^1)$ and $\Sigma_2=(X_2,f^2,h^2,x_0^2)$ be local realizations of $p$ which are both semi-algebraically reachable and semi-algebraically observable. For any $\epsilon >0$ there exist $u \in \widetilde{\U_{pc}}$ such that $T_u<\epsilon$ and there exist open semi-algebraic sets $V_1 \subseteq X_1$, $V_2 \subseteq X_2$ such that $\Sigma_{V_1}=(V_1,f^{V_1},h^{V_1},x_0^{V_1})$ and $\Sigma_{V_2}=(V_2,f^{V_2},h^{V_2},x_0^{V_2})$ are local Nash realizations of $p_u$ which are isomorphic. Here $f^{V_i}$ and $h^{V_i}$ denote restrictions of $f^i$ and $h^i$ respectively, and $x_0^{V_i} = x_{\Sigma_i}(T_u; x_0^i,u)$.
\end{theorem}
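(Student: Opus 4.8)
The plan is to derive everything from a single universal lower bound together with an existence construction that mirrors Procedure \ref{proc:obs}. First I would record the lower bound: if a Nash system $\Sigma=(X,f,h,x_0)$ locally realizes $p_v$ for some input $v$, then (as in the proof of Theorem \ref{thm:obs1}) $\tau_{\Sigma,p_v}^{*}$ is an algebra morphism with $\tau_{\Sigma,p_v}^{*}(A_{obs}(\Sigma))=A_{obs}(\hat p_v)$, where $\hat p_v$ is the restriction of $p_v$ to $\widetilde{\U_{pc}}^{v}\cap\widetilde{\U_{pc}}(\Sigma)$; hence by Proposition \ref{rem:restrict} and Corollary \ref{col1}, $\trdeg A_{obs}(p)=\trdeg A_{obs}(\hat p_v)=\trdeg\tau_{\Sigma,p_v}^{*}(A_{obs}(\Sigma))\le\trdeg A_{obs}(\Sigma)\le\trdeg\Na(X)=\dim X$. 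This already gives the ``only if'' part of (i) and the inequality $\dim\Sigma\ge\trdeg A_{obs}(p)$ needed in (iii); the ``if'' part of (i) is a special case of (ii).

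For (ii), fix a transcendence basis $\varphi_1,\dots,\varphi_d$ of $A_{obs}(p)$ and put $\Phi=(\varphi_1,\dots,\varphi_d)$. Since $A_{obs}(p)$ is closed under the operators $D_\alpha$ and algebraic over $\real(\Phi)$, there are nonzero polynomials $Q_{\alpha,i},P_j\in\real[T,T_1,\dots,T_d]$ of minimal degree in $T$ with $Q_{\alpha,i}(D_\alpha\varphi_i,\Phi)=0$ and $P_j(p_j,\Phi)=0$; minimality and algebraic independence of the $\varphi_i$ give $\partial_T Q_{\alpha,i}(D_\alpha\varphi_i,\Phi)\ne 0$ and $\partial_T P_j(p_j,\Phi)\ne 0$. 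Their product $R$ is a nonzero element of the integral domain $\A(\widetilde{\U_{pc}}\to\real)$; by the analyticity-and-connectedness ``box'' argument already used for Procedures \ref{proc:reach}--\ref{proc:obs} (restricting each response map via Proposition \ref{rem:restrict} and expanding in the switching times around $0$), $R$ is nonzero at some $u\in\widetilde{\U_{pc}}$ with $T_u<\epsilon$. Applying the semi-algebraic implicit function theorem \cite[Cor. 2.9.8]{bochnak:coste:roy:1998} to each $Q_{\alpha,i}$ and $P_j$ near the values taken at $u$ yields Nash functions $\psi_{\alpha,i},\psi_j$ on a common open connected $V\subseteq\real^d$ with $x_0^{V}:=\Phi(u)\in V$; set $\Sigma_V=(V,f^V,h^V,x_0^V)$ with $f^V_\alpha=(\psi_{\alpha,1},\dots,\psi_{\alpha,d})$ and $h^V_j=\psi_j$. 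The curve $t\mapsto\Phi(u\,v_{[0,t]})$ then solves the equations of $\Sigma_V$ for all $v$ in some admissible set of small inputs, so $\Sigma_V$ locally realizes $p_u$ after extending by Proposition \ref{rem:restrict}. Finally $\dim V=d$, and $\tau_{\Sigma_V,p_u}^{*}(A_{obs}(\Sigma_V))=A_{obs}(\hat p_u)$ has transcendence degree $d$ by Proposition \ref{rem:restrict} and Corollary \ref{col1}, which forces $\trdeg A_{obs}(\Sigma_V)=d=\trdeg\Na(V)$ (observability) and $\trdeg\tau_{\Sigma_V,p_u}^{*}(\Na(V))=d=\trdeg\Na(V)$, the latter giving injectivity of $\tau_{\Sigma_V,p_u}^{*}$ by \cite[Ch. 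II, Thm. 28,29]{zariski:samuel:1958} and hence semi-algebraic reachability by Corollary \ref{prop2:col}.

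Part (iii) then follows. Since $\trdeg A_{obs}(p_v)=\trdeg A_{obs}(p)$ for every $v$ (Corollary \ref{col1}), the lower bound of the first paragraph together with (ii) shows that the smallest dimension among all local realizations of all shifts of $p$ is exactly $\trdeg A_{obs}(p)$, so a realization $\Sigma$ of $p_u$ is minimal iff $\dim\Sigma=\trdeg A_{obs}(p)$. For the remaining equivalence: if $\Sigma$ is semi-algebraically reachable and observable then $\tau_{\Sigma,p_u}^{*}$ is injective (Corollary \ref{prop2:col}), so $A_{obs}(\Sigma)\cong A_{obs}(\hat p_u)$ and $\dim\Sigma=\trdeg\Na(X)=\trdeg A_{obs}(\Sigma)=\trdeg A_{obs}(\hat p_u)=\trdeg A_{obs}(p)$; conversely $\dim\Sigma=\trdeg A_{obs}(p)$ forces, through $\trdeg\Na(X)=\trdeg A_{obs}(\hat p_u)\le\trdeg\tau_{\Sigma,p_u}^{*}(\Na(X))\le\trdeg\Na(X)$ and $\trdeg A_{obs}(\Sigma)\ge\trdeg A_{obs}(\hat p_u)=\dim\Sigma=\trdeg\Na(X)$, both injectivity of $\tau_{\Sigma,p_u}^{*}$ (via \cite[Ch. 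II, Thm. 28,29]{zariski:samuel:1958} and Corollary \ref{prop2:col}) and observability.

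For (iv) the idea is to exhibit $\Sigma_1$ and $\Sigma_2$ as restrictions of one common canonical system rather than connecting them directly. Work over $\widetilde{S}=\widetilde{\U_{pc}}\cap\widetilde{\U_{pc}}(\Sigma_1)\cap\widetilde{\U_{pc}}(\Sigma_2)$, which is admissible, put $\hat p=p|_{\widetilde{S}}$, pick a transcendence basis $\phi_1,\dots,\phi_d$ of $A_{obs}(\hat p)$, and lift it through the injective maps $\tau_{\Sigma_i,p}^{*}$ to transcendence bases $\varphi^{(i)}_1,\dots,\varphi^{(i)}_d$ of $A_{obs}(\Sigma_i)\subseteq\Na(X_i)$; by observability these are algebraically independent in $\Na(X_i)$, so $\det D\Phi^{(i)}\ne 0$ for $\Phi^{(i)}=(\varphi^{(i)}_1,\dots,\varphi^{(i)}_d)$. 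Carry out the construction of (ii) for $\hat p$, but choose the input $u$ (with $T_u<\epsilon$) so that in addition $\tau_{\Sigma_1,p}^{*}(\det D\Phi^{(1)})\cdot\tau_{\Sigma_2,p}^{*}(\det D\Phi^{(2)})\cdot R$ is nonzero at $u$ — possible because this is again a nonzero element of the integral domain $\A(\widetilde{S}\to\real)$, to which the small-time argument applies. This produces a semi-algebraically reachable and observable local realization $\Sigma_V$ of $p_u$ with state space $V\subseteq\real^d$ and $x_0^V=\phi(u)$, where $\phi=(\phi_1,\dots,\phi_d)$. Applying $\tau_{\Sigma_i,p}^{*}$ to the identities $Q_{\alpha,i'}(D_\alpha\phi_{i'},\phi)=0$ and $P_j(\hat p_j,\phi)=0$ and using injectivity gives $Q_{\alpha,i'}(f^i_\alpha\varphi^{(i)}_{i'},\Phi^{(i)})=0$ and $P_j(h^i_j,\Phi^{(i)})=0$ on $X_i$; combined with the uniqueness clause of the implicit function theorem this yields $D\Phi^{(i)}f^i_\alpha=f^V_\alpha\circ\Phi^{(i)}$ and $h^i_j=h^V_j\circ\Phi^{(i)}$ near $\bar x_i:=x_{\Sigma_i}(T_u;x_0^i,u)$, while $\Phi^{(i)}(\bar x_i)=\phi(u)=x_0^V$ and $\det D\Phi^{(i)}(\bar x_i)\ne 0$. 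Hence $\Phi^{(i)}$ restricts to a Nash isomorphism from a neighbourhood of $\bar x_i$ onto an open neighbourhood of $x_0^V$ in $V$ and carries the restricted $\Sigma_i$ to the correspondingly restricted $\Sigma_V$; intersecting the two images in $V$ and pulling back gives open $V_i\subseteq X_i$ with $x_0^{V_i}=\bar x_i$ such that $\Sigma_{V_1}$ and $\Sigma_{V_2}$ are both isomorphic to a common restriction of $\Sigma_V$, hence to each other, and each $\Sigma_{V_i}$ is a local realization of $p_u$ by the restriction argument of Theorem \ref{theorem:reach}. The \emph{main obstacle} throughout is the repeated need to choose one input $u$ of arbitrarily small total time that simultaneously makes every relevant implicit-function-theorem determinant nonzero; the resolution in all four parts is to package these constraints into a single nonzero element of the appropriate integral domain of (generalized) response maps and invoke that such an element stays nonzero on inputs of arbitrarily short duration, the rest being routine bookkeeping with implicit-function neighbourhoods and admissible input sets.
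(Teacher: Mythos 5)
Your argument is correct. For parts (i)--(iii) you follow essentially the same skeleton as the paper, except that where the paper simply cites Theorems \ref{thm:5.8}, \ref{thm:5.10}, \ref{thm:5.13} and \ref{thm:5.14} from the companion work, you inline their proofs: the universal bound $\dim\Sigma\ge\trdeg~A_{obs}(p)$ via $\tau^{*}_{\Sigma,p_v}$, the construction of a $d$-dimensional realization from a transcendence basis of $A_{obs}(p)$, and the transcendence-degree counting that characterizes reachability plus observability. (Minor slip: you swap the labels ``if'' and ``only if'' in (i), but both implications are covered.) Part (iv) is where you genuinely diverge. The paper relates $\Sigma_1$ and $\Sigma_2$ directly: it places the images $\varphi^{X_1}_j,\varphi^{X_2}_j$ of the coordinate functions in a common algebra $A$ of response maps, extracts mutually inverse implicit-function maps $\xi_1,\xi_2$ between the two state spaces, verifies $\xi_2\circ\xi_1=\mathrm{id}$ on reachable states and hence everywhere by reachability, and only then obtains invertibility of $D\xi_1(x^1)$ for free. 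You instead factor both systems through a canonical realization $\Sigma_V$ built from $A_{obs}(\hat p)$ and show that each $\Phi^{(i)}$ is a local Nash diffeomorphism intertwining $\Sigma_i$ with $\Sigma_V$, the isomorphism being exhibited explicitly as $(\Phi^{(2)})^{-1}\circ\Phi^{(1)}$. This is a cleaner and more symmetric picture, but it costs one extra ingredient the paper's route avoids: you must know a priori that $d$ algebraically independent Nash functions on a $d$-dimensional domain have not-identically-vanishing Jacobian, so that $\det D\Phi^{(i)}$ can be added to the list of functions your short input $u$ must not annihilate. That fact is standard (an identically singular Jacobian forces the semi-algebraic image to have dimension $<d$, hence a polynomial relation contradicting independence), but it deserves an explicit sentence. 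Your unified treatment of the ``choose one input of arbitrarily small duration making finitely many nonzero elements of an integral domain of response maps simultaneously nonzero'' step is exactly the mechanism the paper deploys separately in each part.
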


\begin{proof}
\\
\noindent
\emph{(i)} The implication "$\implies$" is proven in \cite[Theorem 5.10]{nemcova:petreczky:schuppen:2013}.
For the statement of \cite[Theorem 5.10]{nemcova:petreczky:schuppen:2013} see Theorem \ref{thm:5.10}.
The opposite implication follows from Theorem \ref{thm:5.8} and Corollary \ref{col1}.
\\
\noindent
\emph{(ii)}
If $\trdeg~A_{obs}(p) = d <+\infty$ then, from Theorem \ref{thm:5.10}, for any $\epsilon>0$ one obtains a local realization $\Sigma_u$ of $p_u$ for some $u \in \widetilde{\U_{pc}}$ such that
$\dim \Sigma_u = d=\trdeg~ A_{obs}(p_u|_{\widetilde{\U_{pc}}^u \cap \widetilde{\U_{pc}}(\Sigma)})=\trdeg~ A_{obs}(p_u)$.
Finally, from Theorem \ref{thm:5.14} and from Proposition \ref{prop2} it follows that
$\Sigma_u$ is semi-algebraically reachable and semi-algebraically observable.
Notice that we used Proposition \ref{rem:restrict}.
\\
\noindent
\emph{(iii)} The second equivalence follows directly from Theorem \ref{thm:5.14} and Corollary \ref{col1}. The fact that $\dim \Sigma = \trdeg~A_{obs}(p)$ implies that $\Sigma$ is minimal is due to Theorem \ref{thm:5.13} and Corollary \ref{col1}. Let us prove the opposite implication. Let $\Sigma$ be as in the statement. From \emph{(i)} it follows that $\trdeg~A_{obs}(p_u) <\infty$. Then \emph{(ii)} gives us the existence of a local Nash realization $\Sigma'$ of $p_{uv}$ for some $v\in \widetilde{\U_{pc}}$ such that $\dim \Sigma' = \trdeg~A_{obs}(p_u)$. According to Corollary \ref{col1} $\dim \Sigma' = \trdeg ~A_{obs}(p_{uv})= \trdeg~A_{obs}(p)$. By Theorem \ref{thm:5.13}, this implies that $\Sigma'$ is minimal. Because $\Sigma$ was also minimal, it holds that $\dim \Sigma = \dim \Sigma' = \trdeg~A_{obs}(p)$.
\\
\noindent
\emph{(iv)}
Because $\Sigma_1$ and $\Sigma_2$ are semi-algebraically reachable realizations of $p$,
the maps
$\tau^*_{\Sigma_1,p}: A_{obs}(\Sigma_1) \subseteq \Na(X_1) \rightarrow A_{obs}(p|_{S_1})$ and
$\tau^*_{\Sigma_2,p}: A_{obs}(\Sigma_2) \subseteq \Na(X_2) \rightarrow A_{obs}(p|_{S_2})$ are injective, see Proposition \ref{prop:7.1},
where $S_i=\widetilde{\U_{pc}} \cap \widetilde{\U_{pc}}(\Sigma_i)$, $i=1,2$.
Recall from Proposition \ref{rem:restrict} that
$A_{obs}(p|_{S_i})$ is isomorphic to $A_{obs}(p)$, $i=1,2$.
Let $S=S_1 \cap S_2$.
Consider the algebra $A$ generated by  the restrictions to $S$ of the elements of $\tau^{*}_{\Sigma_1,p}(\Na(X_1))$ and $\tau^{*}_{\Sigma_2,p}(\Na(X_2))$. It is easy to see that $A_{obs}(p|_{S})$ is the restriction to $S$ of the elements of
$\tau^{*}_{\Sigma_i,p}(A_{obs}(\Sigma_i))$, $i=1,2$ and thus $A_{obs}(p|_{S})$ is a subalgebra of $A$. Moreover, since $\Sigma_1$ and $\Sigma_2$ are semi-algebraically observable and
thus $\Na(X_i)$ is algebraic over $A_{obs}(\Sigma_i)$ for $i=1,2$, it follows that
the restrictions to $S$ of elements of
$\tau^{*}_{\Sigma_i,p}(\Na(X_i))$, $i=1,2$ is algebraic over $A_{obs}(p|_{S})$.

Let $\pi_j^{X_1}$ be the $j$th coordinate function on $X_1$,
and let $\pi_j^{X_2}$ be the $j$th coordinate function on $X_2$.
Let $\varphi^{X_i}_j=\tau_{\Sigma_i,p}^{*}(\pi^{X_i}_j)$, $i=1,2$, $j=1,\ldots,n$.
Then both $\varphi_1^{X_1},\ldots,\varphi_n^{X_1}$ and
$\varphi_1^{X_2},\ldots,\varphi_n^{X_2}$ are transcendence basis of $A$.
Thus, for all $j \in \{1,\ldots, n\}$ there exists a non-zero polynomial
$Q_j \in \real[T,T_1,\ldots,T_{n}]$ such that
$Q_j(\varphi_j^{X_2}, \varphi_1^{X_1} , \ldots, \varphi_{n}^{X_1}) = 0$.
Let $Q_j$ be such polynomials of minimal degree with respect to $T$, then
$\frac{d}{dT}Q_j(\varphi_j^{X_2}, \varphi_1^{X_1} , \ldots, \varphi_{n}^{X_1}) \neq 0$.
Similarly, we can choose non-zero polynomials $R_j \in \real[T,T_1,\ldots,T_n]$ such that
$R_j(\varphi_j^{X_1},\varphi_1^{X_2},\ldots,\varphi_n^{X_2})=0$ and
$\frac{d}{dT} R_j(\varphi_j^{X_1},\varphi_1^{X_2},\ldots,\varphi_n^{X_2}) \ne 0$.
In particular, it can be shown that
there exists an input $u$ such that
$\frac{d}{dT}Q_j(\varphi_j^{X_2}, \varphi_1^{X_1} , \ldots, \varphi_{n}^{X_1})(u) \neq 0$,
$\frac{d}{dT}R_j(\varphi_j^{X_1}, \varphi_1^{X_2} , \ldots, \varphi_{n}^{X_2})(u) \neq 0$.
Moreover, due to analyticity of response maps with respect to the switching
times, for any $\epsilon > 0$ we can choose $u$ such that $T_u < \epsilon$.
Indeed, if we define
$S_1:=\frac{d}{dT}Q_1(\varphi_1^{X_2},\varphi_1^{X_1}, \ldots, \varphi_n^{X_1}) \cdots \frac{d}{dT} Q_n(\varphi_n^{X_2},\varphi_1^{X_1},\ldots,\varphi_n^{X_1})$,
and
$S_2:=\frac{d}{dT}R_1(\varphi_1^{X_1},\varphi_1^{X_2}, \ldots, \varphi_n^{X_2}) \cdots \frac{d}{dT} R_n(\varphi_n^{X_1},\varphi_1^{X_2},\ldots$
$\ldots,\varphi_n^{X_2})$, then $S_1S_2$
belongs to $A$. Because $A$ is an integral domain and because $S_1S_2 \ne 0$, it holds that $S_1S_2(u) \ne 0$ for some input $u = (\alpha_1,t_1)\cdots (\alpha_k,t_k)$, where
$t_1,\ldots,t_k \in [0,+\infty)$, $\alpha_1,\ldots,\alpha_k \in U$. But then,
$\frac{d}{dT}Q_j(\varphi_j^{X_2}, \varphi_1^{X_1} , \ldots, \varphi_{n}^{X_1})(u) \neq 0$ and
$\frac{d}{dT}R_j(\varphi_j^{X_1}, \varphi_1^{X_2} , \ldots, \varphi_{n}^{X_2})(u) \neq 0$
for all $j=1,\ldots,n$.
Moreover, since $S_1S_2(u)$ is analytic in $t_1,\ldots,t_k$,
we can choose $t_1,\ldots,t_k$ to be arbitrary small, i.e. $t_1+\cdots+t_k < \epsilon$ for arbitrary $\epsilon>0$.
\par
Define $x^1=(\varphi^{X_1}_1(u),\ldots,\varphi^{X_1}_n(u))$ and
$x^2=(\varphi^{X_2}_1(u),\ldots,\varphi^{X_2}_n(u))$.
By applying the implicit function theorem to the maps $Q_j(\varphi_j^{X_2}, \varphi_1^{X_1} , \ldots, \varphi_{n}^{X_1})$,
it follows that there exist an open semi-algebraic neighborhood
$W_1^{1}$ of $x^1$, an open semi-algebraic neighborhood $W_2^1$ of $x^2$, and a
Nash map $\xi_1:W_1^1 \rightarrow W_2^1$ such that
$\forall x \in W^1_1, y=(y_1,\ldots,y_n) \in W_2^1:$
$(\forall j=1,\ldots,n: Q_j(y_j,x)=0 ) \iff \xi_1(x)=y$.
Similarly, by applying the implicit function theorem to
$R_j(\varphi_j^{X_1}, \varphi_1^{X_2} , \ldots, \varphi_{n}^{X_2})$, $j=1,\ldots,n$,
it follows that there exist open connected semi-algebraic sets
$W_1^2,W_2^2$ such that $x^1 \in W_1^2$, $x^2 \in W_2^2$, and a Nash map
$\xi_2:W_2^{2} \rightarrow W_1^2$ such that
$\forall y \in W^2_2, x=(x_1,\ldots,x_n) \in W_2^2:$
$(\forall j=1,\ldots,n: R_j(x_j,y)=0 ) \iff \xi_2(y)=x$.
Set $W_2=W_2^2 \cap W_2^1 \cap X_2$ and set
$W_1=X_1 \cap \xi_1^{-1}(W_2)$.
It then follows that $x^1 \in W_1$, $x^2 \in W_2$, and $W_1,W_2$ are open semi-algebraic
sets.
Consider the map $\psi(x)=\xi_2(\xi_1(x))-x: W_1 \rightarrow \real^n$.
By applying Proposition \ref{prop2} to $\widetilde{\U_{pc}}=\{ v \in \widetilde{\U_{pc}}(\Sigma_1) \cap \widetilde{\U_{pc}}(\Sigma_2) \mid x_{\Sigma_i}(T_{v};x_0^i,v) \in W_i, i=1,2\}$ and using reachability of
$\Sigma_1$,
it follows that $\psi=0$.

Hence,
$\xi_2(\xi_1(x))=x$ for any $x \in W_1$.
Therefore $\xi_1:W_1 \rightarrow W_2$
is injective and $D\xi_2(\xi_1(x))D\xi_1(x)=I_n$ for all $x \in W_1$.
In particular, $D\xi_2(x^2)D\xi_1(x^1)=I_n$, i.e. the Jacobian of $\xi_1$ at $x^1$
is invertible. Then by inverse function theorem, see \cite{bochnak:coste:roy:1998},
there exist semi-algebraic neighborhoods $V_1 \subseteq W_1$ of $x^1$ and
$V_2 \subseteq W_2$ of $x^2$ such that the restriction $\xi_1|_{V_1}:V_1 \rightarrow V_2$
is a Nash diffeomorphism.
Define the systems $\Sigma_{V_i}=(V_i,f^{V_i},h^{V_i},x^i)$, $i=1,2$, where
$f_{\alpha}^{V_i}$ is the restriction of $f_{\alpha}$ to $V_i$
and $h^{V_i}$ is the restriction of $h$ to $V_i$.
Then  $\phi=\xi_1|_{V_1}$ is the desired isomorphism from $\Sigma_{V_1}$ to $\Sigma_{V_2}$.
\par
We already know that $\phi$ is a Nash diffeomorphism. From
the discussion above, $\phi(x_{\Sigma_{V_1}}(T_v;x_1,v))=x_{\Sigma_{V_2}}(T_v;x_2,v))$
and $h^{V_2}(\phi(x_{\Sigma_{V_1}}(T_v;x_1,v))=h^{V_2}(x_{\Sigma_{V_2}}(T_v;x_2,v))=p(uv)=h^{V_1}(x_{\Sigma_{V_1}}(T_v;x_1,v))$
for all $v \in \widetilde{\U_{pc}}(\Sigma_{V_1}) \cap \widetilde{\U_{pc}}(\Sigma_{V_2})$.
From Proposition \ref{prop2},  by taking $\widetilde{\U_{pc}}=\widetilde{\U_{pc}}(\Sigma_{V_1}) \cap \widetilde{\U_{pc}}(\Sigma_{V_2})$,
it then follows that $f_{\alpha}^{V_2}(\phi(x))=D_{\alpha} \phi(x) f_{\alpha}^{V_1}(x)$
and $h^{V_2}(\phi(x))=h^{V_1}(x)$
for every $x$ which is reachable in $\Sigma_{V_1}$. Since  $\Sigma_{V_1}$ is
semi-algebraically reachable and all the involved maps are Nash functions,
$f^{V_2}_{\alpha} \circ \phi = f^{V_1}_{\alpha}$ and
$h^{V_2} \circ \phi = h^{V_1}$.
From the definition of $\phi$ it follows that $\phi(x_1)=x_2$.
That is, $\phi$ is indeed an isomorphism from $\Sigma_{V_1}$ to $\Sigma_{V_2}$.
\end{proof}

\section*{Conclusions}

The paper provides constructive procedures, not yet algorithms, for reachability and observability reduction of local
Nash realizations of shifted response maps. Further, the characterization of minimality
and existence of local Nash realizations is given. More research is required to extend
the obtained results for global Nash realizations and for applying them to related problems, e.g.
in system identification or model reduction.


\appendix
Below we state the theorems proven in \cite{nemcova:petreczky:schuppen:2013}.

\vspace{0.2cm}
\begin{theorem} \label{thm:5.8}
\cite[Theorem 5.8]{nemcova:petreczky:schuppen:2013}
Let $\Sigma = (X,f,h,x_0)$ be a Nash realization of a response
map $p: \widetilde{\U_{pc}} \rightarrow \real^{r}$. Then, $\trdeg
~A_{obs}(p) \leq \dim (X)$. Hence, if there exists a Nash
realization of a response map $p$ then $\trdeg ~A_{obs}(p) <
+\infty$.
\end{theorem}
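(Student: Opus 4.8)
The plan is to relate the observation algebra $A_{obs}(p)$ of the response map to the observation algebra $A_{obs}(\Sigma) \subseteq \Na(X)$ of the realizing system through the state-to-output map, and then to bound the latter algebra by $\dim X$. First I would set $S = \widetilde{\U_{pc}} \cap \widetilde{\U_{pc}}(\Sigma)$, which is again an admissible set of inputs, so that $\tau_{\Sigma,p}^{*} : \Na(X) \rightarrow \A(S \rightarrow \real)$ is a well-defined algebra morphism. By Proposition \ref{rem:restrict} we have $\trdeg~A_{obs}(p) = \trdeg~A_{obs}(p|_{S})$, so it is enough to bound the transcendence degree of $A_{obs}(p|_{S})$.

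The crucial ingredient is the commutation identity $D_{\alpha} \circ \tau_{\Sigma,p}^{*} = \tau_{\Sigma,p}^{*} \circ L_{f_{\alpha}}$ on $\Na(X)$, valid for every $\alpha \in U$. To establish it, fix $g \in \Na(X)$ and $u \in S$ and note that $\tau_{\Sigma,p}^{*}(g)(u(\alpha,t)) = g(f_{\alpha}^{t}(x_{\Sigma}(T_u;x_0,u)))$; differentiating at $t = 0+$ gives, by the chain rule and the definition of the flow $f_{\alpha}^{t}$, exactly $(L_{f_{\alpha}} g)(x_{\Sigma}(T_u;x_0,u)) = \tau_{\Sigma,p}^{*}(L_{f_{\alpha}} g)(u)$. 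Since $\Sigma$ realizes $p$, one also has $\tau_{\Sigma,p}^{*}(h_j) = p_j|_{S}$. Now $A_{obs}(\Sigma)$ is by definition the smallest subalgebra of $\Na(X)$ containing the components of $h$ and closed under the Lie derivatives $L_{f_{\alpha}}$; applying the morphism $\tau_{\Sigma,p}^{*}$ and iterating the commutation identity, each generator $L_{f_{\alpha_k}} \cdots L_{f_{\alpha_1}} h_j$ is sent to $D_{\alpha_k} \cdots D_{\alpha_1}(p_j|_{S}) \in A_{obs}(p|_{S})$, while conversely $\tau_{\Sigma,p}^{*}(A_{obs}(\Sigma))$ is a $D_{\alpha}$-closed subalgebra containing $p_1|_{S},\ldots,p_r|_{S}$ and hence contains $A_{obs}(p|_{S})$. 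I therefore expect to conclude $\tau_{\Sigma,p}^{*}(A_{obs}(\Sigma)) = A_{obs}(p|_{S})$.

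It then remains to run the transcendence-degree bookkeeping. The map $\tau_{\Sigma,p}^{*}$ restricts to a \emph{surjective} algebra morphism of integral domains from $A_{obs}(\Sigma)$ onto $A_{obs}(p|_{S})$, and preimages of algebraically independent elements stay algebraically independent, so $\trdeg~A_{obs}(p|_{S}) \leq \trdeg~A_{obs}(\Sigma)$. Since $A_{obs}(\Sigma) \subseteq \Na(X)$ and $\trdeg~\Na(X) = \dim X$ (the standard identity between transcendence degree and dimension for an open connected semi-algebraic set, as already used in Proposition \ref{reach:prop1}), we obtain $\trdeg~A_{obs}(p) = \trdeg~A_{obs}(p|_{S}) \leq \dim X$. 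The final ``Hence'' clause is then immediate, because $X \subseteq \real^n$ forces $\dim X \leq n < +\infty$.

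The hard part will be the rigorous justification of the commutation identity at the newly appended switching instant $t = 0+$: one has to verify that the one-sided time derivative of $\tau_{\Sigma,p}^{*}(g)$ along the appended constant input genuinely exists, that it coincides with the Lie derivative $L_{f_{\alpha}} g$ evaluated at the current reached state, and that all the maps produced this way are bona fide analytic response maps in the switching times -- the last point resting on analyticity of the flows of Nash vector fields. The remaining steps (admissibility of $S$, surjections not raising transcendence degree, and $\trdeg~\Na(X) = \dim X$) are routine.
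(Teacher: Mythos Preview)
The paper does not actually prove Theorem~\ref{thm:5.8}; it is only stated in the appendix as a result imported from \cite{nemcova:petreczky:schuppen:2013}, so there is no ``paper's own proof'' to compare against. That said, your argument is correct and is precisely the standard one: the identity $\tau_{\Sigma,p}^{*}(A_{obs}(\Sigma))=A_{obs}(p|_{S})$ that you derive from the commutation $D_{\alpha}\circ\tau_{\Sigma,p}^{*}=\tau_{\Sigma,p}^{*}\circ L_{f_{\alpha}}$ is in fact invoked verbatim (without proof) in the present paper inside the proof of Theorem~\ref{thm:obs1}, and the remaining inequalities $\trdeg~A_{obs}(p|_{S})\le\trdeg~A_{obs}(\Sigma)\le\trdeg~\Na(X)=\dim X$ follow exactly as you indicate. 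Your identification of the only delicate point---the one-sided differentiability at $t=0+$ and the analyticity of the resulting response maps---is accurate, and is handled by the smoothness of flows of Nash vector fields together with the admissibility of $S$.
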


\vspace{0.2cm}
\begin{theorem} \label{thm:5.10}
\cite[Theorem 5.10]{nemcova:petreczky:schuppen:2013}
Assume $\trdeg~A_{obs}(p) = d <+\infty$ and $U$ is finite. For any $\epsilon >0$ one
can choose an input $u \in \widetilde{\U_{pc}}$ and a Nash system $\Sigma$ such that
$T_u < \epsilon$ and $\Sigma$ is a local realization of $p_u$.
\end{theorem}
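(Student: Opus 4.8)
The plan is to build the realization directly from a transcendence basis of the observation algebra $A_{obs}(p)$, treating the basis elements as the coordinate functions of the state. This mirrors the construction in Procedure \ref{proc:obs}, except that there the coordinates are drawn from $A_{obs}(\Sigma) \subseteq \Na(X)$ of a pre-existing system, whereas here they are abstract response maps in $A_{obs}(p) \subseteq \A(\widetilde{\U_{pc}} \rightarrow \real)$; the finiteness hypothesis $\trdeg~A_{obs}(p) = d < \infty$ is exactly what makes a finite coordinate system available. It is convenient to pass first to the negative-time extension $\bar{p}$ of $p$ (Assumption \ref{assum0}), since then $A_{obs}(\bar{p}) \cong A_{obs}(p)$ and the shift maps $\Phi_u$ are algebra isomorphisms (Corollary \ref{col1}); this gives a clean integral-domain setting and lets the chosen shift $u$ be inverted.

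First I would fix a transcendence basis $\varphi_1, \dots, \varphi_d$ of $A_{obs}(p)$ and write $\Phi = (\varphi_1, \dots, \varphi_d)$. Since $D_\alpha \varphi_i \in A_{obs}(p)$ and each component $p_j \in A_{obs}(p)$, all of these are algebraic over $\{\varphi_1, \dots, \varphi_d\}$, so there are nonzero polynomials $Q_{\alpha,i}, P_j \in \real[T,T_1,\dots,T_d]$, of minimal degree in $T$, with $Q_{\alpha,i}(D_\alpha \varphi_i, \Phi) = 0$ and $P_j(p_j, \Phi) = 0$ as identities in $A_{obs}(p)$; minimality forces $\frac{d}{dT}Q_{\alpha,i}(D_\alpha \varphi_i, \Phi) \neq 0$ and $\frac{d}{dT}P_j(p_j, \Phi) \neq 0$. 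Because $\A(\widetilde{\U_{pc}} \rightarrow \real)$ is an integral domain and $U$ is finite, the product $R$ of all these derivative response maps is a nonzero response map; choosing a single input-value sequence on which $R$ does not vanish and using analyticity in the switching times, I can find $u \in \widetilde{\U_{pc}}$ with $T_u < \epsilon$ and $R(u) \neq 0$. Setting $\bar{x} = \Phi(u) \in \real^d$, the Nash implicit function theorem \cite[Corollary 2.9.8]{bochnak:coste:roy:1998} solves $Q_{\alpha,i}(\,\cdot\,,\Phi) = 0$ and $P_j(\,\cdot\,,\Phi) = 0$ near $\bar{x}$, yielding Nash functions $\psi_{\alpha,i}$ and $h_j$ on an open connected semi-algebraic neighbourhood $V$ of $\bar{x}$. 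I then define $\Sigma = (V,f,h,\bar{x})$ with $f_\alpha(z) = (\psi_{\alpha,1}(z), \dots, \psi_{\alpha,d}(z))$ and $h = (h_1, \dots, h_r)$.

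The verification that $\Sigma$ realizes $p_u$ rests on the identity $\frac{d}{ds}\varphi_i\bigl((w)(\alpha,s)\bigr) = (D_\alpha \varphi_i)\bigl((w)(\alpha,s)\bigr)$, which follows from the semigroup behaviour of constant inputs and the definition of $D_\alpha$. Hence, along any constant segment of value $\alpha$, the curve $s \mapsto \Phi$ of the running input satisfies $\dot{z} = D_\alpha \Phi = \psi_\alpha(z) = f_\alpha(z)$ as long as it stays in $V$, and concatenating over the segments of an input $v$ shows that the trajectory of $\Sigma$ from $\bar{x}$ under $v$ equals $\Phi(uv)$ at time $T_v$. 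Feeding this into $P_j(p_j, \Phi) = 0$ gives $h_j(\Phi(uv)) = p_j(uv) = (p_u)_j(v)$, so the output matches. Since $V$ is a neighbourhood of $\bar{x} = \Phi(u)$, this equality holds on an admissible subset $S \subseteq \widetilde{\U_{pc}}^{u}$ on which the trajectory remains in $V$; Proposition \ref{rem:restrict} then upgrades the match on $S$ to a match on all of $\widetilde{\U_{pc}}^{u}$, so $\Sigma$ is a local realization of $p_u$, and $\dim \Sigma = d$ is automatic since $V$ is open in $\real^d$.

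The hard part will be the domain control in the verification step: I must ensure that the response-map values $\Phi$ of the running input track precisely the branch $\psi_{\alpha,i}$ selected by the implicit function theorem, i.e. that the abstract derivation $D_\alpha$ on $A_{obs}(p)$ is faithfully realized as the Lie derivative along $f_\alpha$ in these coordinates. This is where local uniqueness of the implicit solution and a careful shrinking of the admissible input set are essential: the relation $Q_{\alpha,i}(D_\alpha \varphi_i, \Phi) = 0$ holds globally as a response-map identity, but only on $V$ does it pin down $D_\alpha \varphi_i = \psi_{\alpha,i}(\Phi)$, so the whole argument is contingent on keeping the trajectory inside $V$, which is exactly why $u$ and the realization must be chosen locally and $T_u$ can be taken arbitrarily small. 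A secondary technical point is the simultaneous small-time genericity of $u$, handled above via the integral-domain property together with analyticity of response maps in the switching times.
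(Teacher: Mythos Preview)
The paper itself does not prove this statement---Theorem \ref{thm:5.10} appears in the appendix only as a citation of \cite[Theorem 5.10]{nemcova:petreczky:schuppen:2013}---but your proposal is correct and is exactly the construction used there: it is the response-map analogue of Procedure \ref{proc:obs}, with a transcendence basis of $A_{obs}(p)$ supplying the coordinates and the Nash implicit function theorem solving $Q_{\alpha,i}(D_\alpha\varphi_i,\Phi)=0$ and $P_j(p_j,\Phi)=0$ locally to furnish the vector fields and output map. Your identification of the delicate point---that the implicit-function branch must track the actual response-map values, which is guaranteed only while both $\Phi$ and the auxiliary quantities stay in the implicit-function neighbourhoods, and which relies on the semigroup identity supplied by the negative-time extension---is accurate.
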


\vspace{0.2cm}
\begin{proposition} \label{prop:7.1}
\cite[Proposition 7.1]{nemcova:petreczky:schuppen:2013}
Let $\Sigma = (X,f,h,x_0)$ be a Nash realization of a response
map $p: \widetilde{\U_{pc}} \rightarrow \real^r$. Consider the following
statements.
\begin{itemize}
 \item[(i)] $\Sigma$ is semi-algebraically reachable,
 \item[(ii)] the dual input-to-state map $\tau_{\Sigma}^* : \Na(X) \rightarrow \A(\widetilde{\U_{pc}} \rightarrow \real)$ is injective,
 \item[(iii)] the ideal $\Ker ~\tau_{\Sigma}^*$ is the zero ideal in $\Na(X)$,
 \item[(iv)] $\trdeg ~A_{obs}(p) = \trdeg ~A_{obs}(\Sigma)$.
\end{itemize}
The statements (i), (ii) and (iii) are equivalent. The statement (ii) implies (iv). If
$\Sigma$ is semi,-algebraically observable, then (iv) and (ii) are equivalent.
\end{proposition}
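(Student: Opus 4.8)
The plan is to reduce the three ``reachability'' conditions to a single statement about the kernel of the algebra morphism $\tau_{\Sigma}^{*}$, and then to handle the implications involving observation algebras by exploiting that $\tau_{\Sigma}^{*}$ intertwines the Lie derivatives $L_{f_{\alpha}}$ on $\Na(X)$ with the operators $D_{\alpha}$ on response maps. First I would observe, directly from $\tau_{\Sigma}^{*}(g)(u)=g(x_{\Sigma}(T_u;x_0,u))$, that $\Ker~\tau_{\Sigma}^{*}$ consists of exactly those $g\in\Na(X)$ that vanish at every reachable point, i.e. $\Ker~\tau_{\Sigma}^{*}=I$, where $I$ is the ideal of functions vanishing on $\reach(x_0)$. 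With this identification the equivalence of (i), (ii), (iii) is immediate: (iii) is the statement $I=\{0\}$, which is literally the definition of semi-algebraic reachability (i) once one notes that ``$g=0$ on $\reach(x_0)$'' means $g\in I$; and (ii) $\Leftrightarrow$ (iii) is the standard fact that an algebra morphism is injective precisely when its kernel is the zero ideal. Care is needed only because the codomain must be taken over a set of inputs on which trajectories exist; since $\widetilde{\U_{pc}}(\Sigma)$ is admissible and $\A(\widetilde{\U_{pc}}(\Sigma)\rightarrow\real)$ is an integral domain, Proposition \ref{rem:restrict} lets me pass between restrictions without changing kernels or transcendence degrees.

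For (ii) $\Rightarrow$ (iv) the key computation is that $\tau_{\Sigma}^{*}$ transports the system-level derivations to the response-level ones. Differentiating $t\mapsto g(x_{\Sigma}(T_u+t;x_0,u(\alpha,t)))$ at $t=0^{+}$ and using $\dot{x}=f_{\alpha}(x)$ yields the intertwining identity $D_{\alpha}\circ\tau_{\Sigma}^{*}=\tau_{\Sigma}^{*}\circ L_{f_{\alpha}}$. Since $\Sigma$ realizes $p$, I also have $\tau_{\Sigma}^{*}(h_j)=p_j$ on the common domain. Because $A_{obs}(\Sigma)$ is generated from the $h_j$ by the operators $L_{f_{\alpha}}$ while $A_{obs}(p)$ is generated from the $p_j$ by the $D_{\alpha}$, the intertwining identity shows that $\tau_{\Sigma}^{*}$ maps $A_{obs}(\Sigma)$ \emph{onto} $A_{obs}(p)$. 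If $\tau_{\Sigma}^{*}$ is injective, its restriction to $A_{obs}(\Sigma)$ is an algebra isomorphism onto $A_{obs}(p)$, whence $\trdeg~A_{obs}(\Sigma)=\trdeg~A_{obs}(p)$, which is (iv).

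For the converse under observability, assume (iv) together with $\trdeg~A_{obs}(\Sigma)=\trdeg~\Na(X)$. From the previous paragraph $\tau_{\Sigma}^{*}(A_{obs}(\Sigma))=A_{obs}(p)$, so $\trdeg~\tau_{\Sigma}^{*}(A_{obs}(\Sigma))=\trdeg~A_{obs}(p)=\trdeg~A_{obs}(\Sigma)=\trdeg~\Na(X)$. Since $A_{obs}(\Sigma)\subseteq\Na(X)$ and a morphism cannot increase transcendence degree, the subalgebra already realizes the full degree, and therefore $\trdeg~\tau_{\Sigma}^{*}(\Na(X))=\trdeg~\Na(X)$. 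Both $\Na(X)$ (as $X$ is connected) and its image $\tau_{\Sigma}^{*}(\Na(X))$ are integral domains over $\real$, and $\tau_{\Sigma}^{*}$ surjects onto the latter; hence by \cite[Chapter II, Theorems 28,29]{zariski:samuel:1958} the equality of transcendence degrees forces $\Ker~\tau_{\Sigma}^{*}=\{0\}$, i.e. (ii). Together with (ii) $\Rightarrow$ (iv) this gives the stated equivalence of (ii) and (iv) under observability.

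The main obstacle I anticipate is bookkeeping rather than conceptual: I must pin down the exact domain of $\tau_{\Sigma}^{*}$ and verify that the surjection $\tau_{\Sigma}^{*}(A_{obs}(\Sigma))=A_{obs}(p)$ respects the distinction between $p$ and its restriction to $\widetilde{\U_{pc}}\cap\widetilde{\U_{pc}}(\Sigma)$. This is where Proposition \ref{rem:restrict} does the real work, guaranteeing that restricting to an admissible subset preserves injectivity and transcendence degree, so that the algebraic identities above remain valid. The one genuinely analytic ingredient is the intertwining identity, whose justification uses the piecewise-differentiability of $x_{\Sigma}$ and the right-hand derivative at a switching instant; everything else is commutative algebra.
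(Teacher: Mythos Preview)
The paper does not give its own proof of this proposition; it is quoted in the Appendix as a result from \cite{nemcova:petreczky:schuppen:2013} and used as a black box. Your argument is correct and is exactly in line with the techniques the paper deploys elsewhere: the identification $\Ker~\tau_{\Sigma}^{*}=I$ and the Zariski--Samuel step appear verbatim in the proof of Proposition~\ref{reach:prop1}, and the intertwining identity $D_{\alpha}\circ\tau_{\Sigma}^{*}=\tau_{\Sigma}^{*}\circ L_{f_{\alpha}}$ together with $\tau_{\Sigma}^{*}(A_{obs}(\Sigma))=A_{obs}(\hat{p})$ is precisely what the proof of Theorem~\ref{thm:obs1} invokes to establish reachability of $\Sigma_V$, so your reconstruction is essentially the intended one.
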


\vspace{0.2cm}
\begin{theorem} \label{thm:5.13}
\cite[Theorem 5.13]{nemcova:petreczky:schuppen:2013}
If the dimension of a Nash realization $\Sigma$
of a response map $p$ equals $\trdeg ~ A_{obs}(p)$,
 then $\Sigma$ is a minimal Nash realization of $p$.
\end{theorem}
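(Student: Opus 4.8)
The plan is to derive Theorem~\ref{thm:5.13} directly from the universal dimension bound furnished by Theorem~\ref{thm:5.8}. The point is that $\trdeg~A_{obs}(p)$ depends only on the response map $p$ and not on any particular realization, whereas Theorem~\ref{thm:5.8} asserts that \emph{every} Nash realization of $p$ has state-space dimension at least $\trdeg~A_{obs}(p)$. Thus $\trdeg~A_{obs}(p)$ is a lower bound for the dimension of any realization of $p$, and a realization that attains this bound is automatically of smallest possible dimension, i.e. minimal.

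Concretely, I would first fix an arbitrary Nash realization $\Sigma' = (X',f',h',x_0')$ of $p$ and invoke Theorem~\ref{thm:5.8} applied to $\Sigma'$ to obtain
\[
\trdeg~A_{obs}(p) \leq \dim \Sigma'.
\]
Next, I would use the hypothesis $\dim \Sigma = \trdeg~A_{obs}(p)$ to rewrite the left-hand side, yielding $\dim \Sigma \leq \dim \Sigma'$. Since $\Sigma'$ was an arbitrary Nash realization of $p$, this shows that $\Sigma$ has dimension no larger than that of any competing realization, which is exactly the definition of a minimal realization.

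There is no genuine obstacle at this level: the analytic and algebraic content is already packaged in Theorem~\ref{thm:5.8}, whose proof delivers the inequality $\trdeg~A_{obs}(p) \leq \dim X$ by realizing $A_{obs}(p)$ as a homomorphic image of the observation algebra $A_{obs}(\Sigma) \subseteq \Na(X)$, so that transcendence degree can only drop. The only point requiring attention is that this lower bound is the same number for every realization of $p$ — it is intrinsic to $p$ — so that the comparison $\dim \Sigma \leq \dim \Sigma'$ holds uniformly as $\Sigma'$ ranges over all realizations. Granting this, the minimality of $\Sigma$ follows immediately, and no separate compactness, connectedness, or implicit-function-theorem argument is needed.
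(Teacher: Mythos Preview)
Your argument is correct: Theorem~\ref{thm:5.8} gives $\trdeg~A_{obs}(p) \le \dim \Sigma'$ for every Nash realization $\Sigma'$ of $p$, so a realization $\Sigma$ with $\dim \Sigma = \trdeg~A_{obs}(p)$ achieves the universal lower bound and is therefore minimal. Note that the present paper does not actually prove Theorem~\ref{thm:5.13} but merely quotes it from \cite{nemcova:petreczky:schuppen:2013} in the appendix; your derivation from Theorem~\ref{thm:5.8} is the natural one and is presumably what the cited reference does as well.
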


\vspace{0.2cm}
\begin{theorem} \label{thm:5.14}
\cite[Theorem 5.14]{nemcova:petreczky:schuppen:2013}
A Nash realization $\Sigma$ of a response map $p$ is
semi-algebraically reachable and semi-algebraically observable
if and only if $\dim (\Sigma) = \trdeg~A_{obs}(p)$.
\end{theorem}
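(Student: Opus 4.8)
The plan is to deduce all four parts from the two structural facts quoted in the appendix, Theorem~\ref{thm:5.8} and Theorem~\ref{thm:5.10}, together with the shift-invariance of the observation algebra (Corollary~\ref{col1}) and, for the last part, the implicit/inverse function theorems combined with Proposition~\ref{prop2}. For \emph{(i)}, the implication $\trdeg~A_{obs}(p)<+\infty \Rightarrow \exists u: p_u$ realizable is precisely the content of Theorem~\ref{thm:5.10} (which also supplies the finiteness of $U$ hypothesis, part of our standing assumptions). For the converse I would take a local realization $\Sigma=(X,f,h,x_0)$ of $p_u$, invoke Theorem~\ref{thm:5.8} to get $\trdeg~A_{obs}(p_u)\leq \dim X<+\infty$, and then transport this across the isomorphism of Corollary~\ref{col1} to conclude $\trdeg~A_{obs}(p)=\trdeg~A_{obs}(p_u)<+\infty$.

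For \emph{(ii)} I would apply Theorem~\ref{thm:5.10} to obtain, for the prescribed $\epsilon$, an input $u$ with $T_u<\epsilon$ and a local realization $\Sigma$ of $p_u$ whose dimension is exactly $d$; the equality $\dim\Sigma=d=\trdeg~A_{obs}(p)=\trdeg~A_{obs}(p_u)$ uses Corollary~\ref{col1} and Proposition~\ref{rem:restrict} to pass between $p_u$ and its restriction to $\widetilde{\U_{pc}}^u\cap\widetilde{\U_{pc}}(\Sigma)$. Once the dimension matches the transcendence degree, Theorem~\ref{thm:5.14} immediately certifies that $\Sigma$ is both semi-algebraically reachable and semi-algebraically observable, with Proposition~\ref{prop2} reconciling the two reachability conventions. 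An alternative route, should the dimension from Theorem~\ref{thm:5.10} not be manifestly $d$, is to reduce an arbitrary realization via Procedure~\ref{proc:reach} (Theorem~\ref{theorem:reach}) and then Procedure~\ref{proc:obs} (Theorem~\ref{thm:obs1}), whose output has dimension $\trdeg~A_{obs}=d$ by Theorem~\ref{thm:5.14}.

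For \emph{(iii)} the second equivalence, $\dim\Sigma=\trdeg~A_{obs}(p)\iff \Sigma$ reachable and observable, is Theorem~\ref{thm:5.14} applied to $p_u$ and transported by Corollary~\ref{col1}. For the first equivalence, the direction $\dim\Sigma=\trdeg~A_{obs}(p)\Rightarrow \Sigma$ minimal is Theorem~\ref{thm:5.13} (again via Corollary~\ref{col1}). The reverse direction I would handle by a comparison argument: part \emph{(i)} gives $\trdeg~A_{obs}(p_u)<+\infty$, part \emph{(ii)} then produces a realization $\Sigma'$ of some $p_{uv}$ with $\dim\Sigma'=\trdeg~A_{obs}(p_{uv})=\trdeg~A_{obs}(p)$, which Theorem~\ref{thm:5.13} certifies minimal; since $\Sigma$ is minimal by hypothesis, $\dim\Sigma=\dim\Sigma'=\trdeg~A_{obs}(p)$.

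Part \emph{(iv)} is where the real work lies. Since $\Sigma_1,\Sigma_2$ are reachable realizations of $p$, the dual maps $\tau^{*}_{\Sigma_i,p}$ are injective by Proposition~\ref{prop:7.1}, so the coordinate functions push forward to response maps $\varphi^{X_i}_j=\tau^{*}_{\Sigma_i,p}(\pi^{X_i}_j)$ which, by semi-algebraic observability, are transcendence bases of one common algebra $A$ on the shared input set $S_1\cap S_2$ (using Proposition~\ref{rem:restrict} to identify the restricted algebras with $A_{obs}(p)$). Algebraicity of each basis over the other yields minimal-degree polynomials $Q_j,R_j$ in $\real[T,T_1,\ldots,T_n]$; minimality in $T$ keeps $\frac{d}{dT}Q_j$ and $\frac{d}{dT}R_j$ nonzero, and since $A$ is an integral domain their product is nonzero, so by analyticity in the switching times an input $u$ with $T_u<\epsilon$ can be located at which all these derivatives are nonzero. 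I would then apply the implicit function theorem at $x^i=(\varphi^{X_i}_1(u),\ldots,\varphi^{X_i}_n(u))$ to build Nash maps $\xi_1,\xi_2$ between neighborhoods, show $\xi_2\circ\xi_1=\mathrm{id}$ on a reachable set by Proposition~\ref{prop2} and reachability of $\Sigma_1$, and conclude via the inverse function theorem that $\xi_1$ restricts to a Nash diffeomorphism $V_1\to V_2$. The hard part, which I expect to absorb most of the effort, is the final verification that this diffeomorphism is a \emph{system} isomorphism: the identities matching trajectories and outputs hold a priori only along reachable trajectories, and upgrading them to the pointwise equalities $f^{V_2}_\alpha\circ\phi=D\phi\,f^{V_1}_\alpha$ and $h^{V_2}\circ\phi=h^{V_1}$ on all of $V_1$ is exactly the transfer from ``equal along reachable trajectories'' to ``equal as Nash functions'' furnished by Proposition~\ref{prop2} together with semi-algebraic reachability of $\Sigma_{V_1}$.
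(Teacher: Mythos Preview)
Your proposal does not address the stated Theorem~\ref{thm:5.14} at all. That theorem is a single biconditional (reachable and observable $\iff \dim\Sigma=\trdeg~A_{obs}(p)$), whereas your write-up treats a four-part statement with items \emph{(i)}--\emph{(iv)}; that is the unnumbered main theorem of Section~\ref{sec:min}, not Theorem~\ref{thm:5.14}. Worse, your argument repeatedly \emph{invokes} Theorem~\ref{thm:5.14} as an established tool (in your parts \emph{(ii)} and \emph{(iii)}), so viewed as a proof of Theorem~\ref{thm:5.14} it would be circular. Finally, note that in this paper Theorem~\ref{thm:5.14} carries no proof: it is quoted in the appendix from \cite{nemcova:petreczky:schuppen:2013}, so there is nothing in the present paper to compare against.

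If, on the other hand, your target was the four-part theorem of Section~\ref{sec:min}, then your plan coincides with the paper's own proof essentially line for line: the same appeals to Theorems~\ref{thm:5.8}, \ref{thm:5.10}, \ref{thm:5.13}, \ref{thm:5.14}, Corollary~\ref{col1}, and Propositions~\ref{rem:restrict}, \ref{prop2}, \ref{prop:7.1}; the same minimal-degree polynomials $Q_j,R_j$ and product argument to locate $u$ with $T_u<\epsilon$; the same implicit/inverse function theorem construction of $\xi_1,\xi_2$; and the same use of reachability plus Proposition~\ref{prop2} to upgrade trajectory-wise identities to pointwise ones. In that reading there is no substantive difference between your route and the paper's.
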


\vspace{0.2cm}
\begin{theorem} \label{thm:5.15}
\cite[Theorem 5.15]{nemcova:petreczky:schuppen:2013}
If a Nash realization $\Sigma$ of a response map $p$ is
semi-algebraically reachable and semi-algebraically observable
then $\Sigma$ is minimal. In particular, if $\Sigma$ is semi-algebraically reachable and strongly
semi-algebraically observable, then it is minimal.
\end{theorem}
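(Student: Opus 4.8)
The plan is to reduce the implication to a single dimension identity. By Theorem \ref{thm:5.8}, every Nash realization $\Sigma'$ of $p$ satisfies $\trdeg~A_{obs}(p) \le \dim \Sigma'$, so $\trdeg~A_{obs}(p)$ is a lower bound for the dimension of \emph{any} realization of $p$. Therefore, to prove that a semi-algebraically reachable and semi-algebraically observable $\Sigma$ is minimal, it suffices to show that $\Sigma$ attains this bound, namely $\dim \Sigma = \trdeg~A_{obs}(p)$; minimality then follows because no realization can be of strictly smaller dimension. This last step is exactly Theorem \ref{thm:5.13}, so the core of the argument is the dimension identity.

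To obtain $\dim \Sigma = \trdeg~A_{obs}(p)$ I would chain the two hypotheses through Proposition \ref{prop:7.1}. Semi-algebraic reachability of $\Sigma$ is, by the equivalence of (i), (ii) and (iii) in Proposition \ref{prop:7.1}, the same as injectivity of the dual input-to-state map $\tau_{\Sigma}^{*}:\Na(X) \rightarrow \A(\widetilde{\U_{pc}} \rightarrow \real)$; and by the implication (ii) $\Rightarrow$ (iv) this injectivity yields $\trdeg~A_{obs}(p) = \trdeg~A_{obs}(\Sigma)$. Semi-algebraic observability (Definition \ref{def:obs}) gives $\trdeg~A_{obs}(\Sigma) = \trdeg~\Na(X)$, and since $X$ is an open connected semi-algebraic subset of $\real^n$ one has $\trdeg~\Na(X) = \dim X = \dim \Sigma$. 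Chaining these equalities produces $\trdeg~A_{obs}(p) = \dim \Sigma$, which is precisely the characterization recorded in Theorem \ref{thm:5.14}. Combined with the lower bound of Theorem \ref{thm:5.8}, this already forces minimality, so the first assertion is proven.

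For the ``in particular'' clause I would verify, directly from the definitions in \cite{nemcova:petreczky:schuppen:2013}, that strong semi-algebraic observability is a strictly stronger property than semi-algebraic observability, so that a strongly semi-algebraically observable realization is in particular semi-algebraically observable. A semi-algebraically reachable and strongly semi-algebraically observable $\Sigma$ then satisfies the hypotheses of the first assertion and is minimal by the argument above.

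I expect no deep obstacle here, since every ingredient is available as a prior result; the argument is essentially the observation that Theorem \ref{thm:5.14} and Theorem \ref{thm:5.13} combine to give the claim. The two points that require attention are the identification $\trdeg~\Na(X) = \dim X$, which uses that $X$ is open so that the coordinate functions form a transcendence basis of $\Na(X)$, and the transcendence-degree consequence of injectivity underlying (ii) $\Rightarrow$ (iv) of Proposition \ref{prop:7.1}, which ultimately rests on \cite[Chapter II, Theorem 28,29]{zariski:samuel:1958}. The only genuinely new verification is that strong observability subsumes ordinary observability, as this depends on definitions not reproduced in the present excerpt.
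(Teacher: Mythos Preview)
The paper does not prove Theorem \ref{thm:5.15}; it is listed in the appendix as a result quoted verbatim from \cite{nemcova:petreczky:schuppen:2013}, with no proof given here. Your argument is correct and is precisely the natural one: Theorem \ref{thm:5.14} converts the reachability-plus-observability hypothesis into $\dim \Sigma = \trdeg~A_{obs}(p)$, and Theorem \ref{thm:5.13} (or equivalently the lower bound of Theorem \ref{thm:5.8}) then gives minimality. Your intermediate unpacking via Proposition \ref{prop:7.1} and Definition \ref{def:obs} is in fact a sketch of one direction of Theorem \ref{thm:5.14} itself, so strictly speaking you could shorten the proof to a one-line appeal to \ref{thm:5.14} and \ref{thm:5.13}. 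Your handling of the ``in particular'' clause is also appropriate: strong semi-algebraic observability is defined in \cite{nemcova:petreczky:schuppen:2013} so that it implies semi-algebraic observability, and that implication is all that is needed.
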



\begin{thebibliography}{1}

\bibitem{bartosiewicz:1987b}
Z.~Bartosiewicz, {\em Rational systems and observation fields}, Systems and Control Letters, 9:379--386, 1987.

\bibitem{bartosiewicz:1988}
Z.~Bartosiewicz, {\em Minimal polynomial realizations}, Mathematics of control, signals, and systems, 1:227--237, 1988.

\bibitem{bartosiewicz:kotta:pawluszewicz:wyrwas:2011}
Z.~Bartosiewicz, U.~Kotta, E.~Pawluszewicz, M.~Wyrwas, {\em Control systems on regular time scales and their differential rings}, Mathematics of control, signals, and systems, 22:185 --201, 2011.

\bibitem{bart:time_scale}
Z.~Bartosiewicz, E.~Pawluszewicz, {\em Realizations of nonlinear control systems on time scales}, IEEE Trans. Aut. Control, 53:571 -- 575, 2008.

\bibitem{bochnak:coste:roy:1998}
J.~Bochnak, M.~Coste, and M.-F.~Roy, {\it Real algebraic geometry}, Springer-Verlag, Berlin Heidelberg, 1998.

\bibitem{isi:nonlin}
A.~Isidori, {\it Nonlinear control systems}, 3rd ed., Springer, 2001.

\bibitem{Jak:RealNonlin}
B.~Jakubczyk, {\em Existence and uniqueness of realizations of nonlinear systems}, SIAM J. Control and Optimization, 18(4):455 -- 471, 1980.

\bibitem{kunz:1985}
E.~Kunz, {\em Introduction to commutative algebra and algebraic geometry}, Birkh\"{a}user, Boston, 1985.

\bibitem{cdc}
J.~N\v{e}mcov\'{a} and M.~Petreczky and J.H.~van Schuppen, {\em Realization Theory of Nash Systems}, in Proc. of CDC, Shanghai, China, pp. 5935-5940, 2009.

\bibitem{nemcova:petreczky:schuppen:2013}
J.~N\v{e}mcov\'{a}, M.~Petreczky, J.H.~van Schuppen, {\em Realization Theory of Nash Systems}, submitted.

\bibitem{ja:2008b}
J.~N\v{e}mcov\'{a} and J.H.~van Schuppen, {\em Realization theory for rational systems: The existence of rational
  realizations}, SIAM J. Control Optim., 48(4): 2840--2856, 2009.

\bibitem{ja:2008c}
J.~N\v{e}mcov\'{a} and J.H.~van Schuppen, {\em Realization theory for rational systems: Minimal rational
  realizations}, Acta Applicandae Mathematicae, 110: 605--626, 2010.

\bibitem{rudin:1973}
W.~Rudin, {\it Functional analysis}, McGraw-Hill Book Company, 1973.

\bibitem{savageau:1969a}
M.A.~Savageau, {\em Biochemical systems analysis. {I}. some mathematical properties of the rate law for the component enzymatic reactions.}, J. Theor. Biol., 25(3):365--369, 1969.

\bibitem{savageau:1969b}
M.A.~Savageau, {\em Biochemical systems analysis. {II}. the steady-state solutions for an $n$-pool system using a power-law approximation.}, J. Theor. Biol., 25(3):370--379, 1969.

\bibitem{savageau:1976}
M.A.~Savageau, {\em Biochemical systems analysis. A study of function and design in molecular biology}, Addison-Wesley, Massachusetts, 1976.

\bibitem{Son:Resp}
E. ~D. Sontag, {\em Polynomial Response Maps}, volume~13 of Lecture Notes in Control and Information Sciences, Springer Verlag, 1979.

\bibitem{visser:etal:2000}
D.~Visser, R.~van~der Heijden, K.~Mauch, M.~Reuss, and S.~Heijnen, {\em Tendency modeling: A new approach to obtain simplified kinetic models of metabolism applied to saccharomyces cerevisiae}, Metabolic Engineering, 2:252--275, 2000.

\bibitem{voit:2000}
E.O.~Voit, {\em Computational analysis of biochemical systems: A practical guide
  for biochemists and molecular biologists}, Cambridge University Press, Cambridge, 2000.

\bibitem{wang:sontag:1992a}
Y.~Wang and E.D.~Sontag, {\em Algebraic differential equations and rational control systems}, SIAM J. Control Optim., 30(5):1126--1149, 1992.

\bibitem{zariski:samuel:1958}
O.~Zariski and P.~Samuel, {\em Commutative algebra I, II}, Springer, 1958.

\end{thebibliography}
\end{document}